\theoremstyle{plain}
\newtheorem{theorem}{Theorem}
\newtheorem{lemma}{Lemma}
\newtheorem{proposition}{Proposition}
\theoremstyle{definition}
\newtheorem*{acknowledgements}{Acknowledgements}
\let\wh\widehat
\let\ge\geqslant
\let\le\leqslant
\newcommand{\trdeg}{\operatorname{tr\,deg}}
\renewcommand{\d}{{\mathrm d}}
\newcommand\bm{{\boldsymbol m}}
\newcommand\bn{{\boldsymbol n}}
\newcommand\bbf{{\boldsymbol f}}
\newcommand\bB{{\boldsymbol B}}
\newcommand\bM{{\bold M}}
\newcommand\bN{{\bold N}}
\newcommand\cM{{\mathcal M}}
\newcommand\cF{{\mathcal F}}
\newcommand\cP{{\mathcal P}}
\newcommand\SL{SL}
\newcommand{\raisecomma}{\raisebox{2pt}{$,$}}
\newcommand{\raisedot}{\raisebox{2pt}{$.$}}
\newcommand{\mutilde}{{\widetilde \mu}}
\newcommand{\Mtilde}{{\widetilde {\mathcal M}}}
\begin{document}

\title[Algebraic independence of Mahler functions]{Algebraic independence of Mahler functions\\ via radial asymptotics}

\author{Richard P.~Brent}
\address{Mathematical Sciences Institute\\
Australian National University\\
Canberra, ACT 0200\\
Australia}
\email{Richard.Brent@anu.edu.au}

\author{Michael Coons}
\address{School of Mathematical and Physical Sciences\\
The University of Newcastle\\
Callag\-han, NSW 2308\\
Australia}
\email{Michael.Coons@newcastle.edu.au}

\author{Wadim Zudilin}
\address{School of Mathematical and Physical Sciences\\
The University of Newcastle\\
Callag\-han, NSW 2308\\
Australia}
\email{Wadim.Zudilin@newcastle.edu.au}

\thanks{The research of R.\,P.~Brent was support by ARC grant DP140101417,
the research of M.~Coons was supported by ARC grant DE140100223, and
the research of W.~Zudilin was supported by ARC grant DP140101186}

\date{26 December 2014. \emph{Revised}: 4 April 2015}

\subjclass[2010]{Primary 11J91; Secondary 11J81, 12H10, 30B30, 33F05, 39A45, 65D20}

\begin{abstract}
We present a new method for algebraic independence results in the context of Mahler's method.
In particular, our method uses the asymptotic behaviour of a Mahler function $f(z)$
as $z$ goes radially to a root of unity to deduce
algebraic independence results about the values of $f(z)$ at algebraic numbers.
We apply our method to the canonical example of a degree
two Mahler function; that is, we apply it to $F(z)$, the power series solution to
the functional equation $F(z)-(1+z+z^2)F(z^4)+z^4F(z^{16})=0$. Specifically, we
prove that the functions $F(z)$, $F(z^4)$, $F'(z)$, and $F'(z^4)$ are algebraically
independent over $\mathbb{C}(z)$. An application of a celebrated result of
Ku.~Nishioka then allows one to replace $\mathbb{C}(z)$ by $\mathbb{Q}$ when
evaluating these functions at a nonzero algebraic number $\alpha$ in the unit disc.
\end{abstract}

\maketitle

\section{Introduction}
\label{s1}

We say a function $f(z)\in\mathbb{C}[[z]]$ is a \emph{Mahler function} provided
there are integers $k\ge 2$ and $d\ge 0$ and polynomials $a(z),a_0(z),\dots,a_d(z)\in\mathbb{C}[z]$
with $a_0(z)a_d(z)\ne 0$ such that
\begin{equation}
\label{eq-mahler}
a(z)+a_0(z)f(z)+a_1(z)f(z^k)+\dots+a_d(z)f(z^{k^d})=0.
\end{equation}
We call the (minimal) integer $d$ the \emph{degree} of the Mahler function~$f$.
In the last few decades the study of Mahler functions has been given renewed importance
because of their relationships to theoretical computer science and linguistics~\cite{AS2003}.
In particular, the generating function of an automatic sequence is a Mahler function.

While transcendence questions concerning Mahler functions have more or less been
answered, much less is known about the deeper area of algebraic independence of the functions and their derivatives.
All of the current results in the latter direction, and certainly the most practical examples, concern only
Mahler functions of degree one \cite{B2012, B2013, BV2013}. Until now, these
results relied on a hypertranscendence criterion due to Ke.~Nishioka \cite{N1984}.
Recall that a function is called \emph{hypertranscendental} provided it
does not satisfy an algebraic differential equation; in other words, the function
and all its derivatives are algebraically independent over the field of
rational functions.

In this paper, we introduce a new method for proving algebraic independence
results for Mahler functions and their derivatives. We apply this method to a degree two Mahler
function introduced by Dilcher and Stolarsky \cite{DS2009}, which has quite
recently become the canonical example of a degree two Mahler function.
Specifically, we consider the function $F(z)\in\mathbb{Z}[[z]]$ satisfying the
functional equation
\begin{equation}
\label{Fdefn}
F(z)=(1+z+z^2)F(z^4)-z^4F(z^{16}),
\end{equation}
which starts
$$
F(z)=1+z+z^2+z^5+z^6+z^8+z^9+z^{10}+\cdots.
$$
Among various combinatorial properties, Dilcher and Stolarsky \cite{DS2009} showed
that all the coefficients of $F(z)$ are in $\{0,1\}$. Coons \cite{C2010} proved that $F(z)$ is transcendental
and Adamczewski \cite{A2010} gave the transcendence of the values $F(\alpha)$ for any nonzero algebraic number
$\alpha$ inside the unit disc. Recently, Bundschuh and V\"a\"an\"anen \cite{BV2014a}
proved that $F(z)$ and $F(z^4)$ are algebraically independent over $\mathbb{C}(z)$,
and very recently \cite{BV2014b} they showed that the functions $F(z)$, $F(z^2)$,
and $F(z^4)$ are algebraically independent over $\mathbb{C}(z)$.

Our central result is the following theorem.

\begin{theorem}
\label{funcs}
The functions $F(z)$, $F(z^4)$, $F'(z)$, and $F'(z^4)$ are algebraically independent over $\mathbb{C}(z)$.
\end{theorem}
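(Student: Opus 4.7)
The strategy is to derive two-mode radial asymptotic expansions for $F(e^{-t})$ and $F'(e^{-t})$ as $t \to 0^+$, and to show that they are incompatible with any nontrivial polynomial relation among $F(z)$, $F(z^4)$, $F'(z)$, $F'(z^4)$. Substituting $z = e^{-t}$ and the ansatz $F(e^{-t}) \sim t^{\omega}$ into \eqref{Fdefn}, and using $1+z+z^2 \to 3$, $z^4 \to 1$ as $t \to 0^+$, I obtain the characteristic equation $1 = 3 \cdot 4^{-\omega} - 16^{-\omega}$. Setting $y = 4^{-\omega}$, this becomes $y^2 - 3y + 1 = 0$, with roots $y = \varphi^{\pm 2}$, where $\varphi = (1+\sqrt{5})/2$. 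Hence the two relevant exponents are $\pm\log_2 \varphi$, which is irrational.

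\textbf{Asymptotic expansion.} Iterating the functional equation and bootstrapping the error term, I aim to establish, for some $\varepsilon > 0$,
\begin{equation*}
F(e^{-t}) = t^{-\log_2 \varphi}\,\Phi_+(\log_4 t) + t^{\log_2 \varphi}\,\Phi_-(\log_4 t) + O(t^{\varepsilon})
\end{equation*}
as $t \to 0^+$, where $\Phi_+$ and $\Phi_-$ are continuous $1$-periodic functions; the dominant amplitude $\Phi_+$ is nonzero because $F(z)\to\infty$ as $z\to 1^-$, and I would further verify that $\Phi_-$ is nonzero and that both are non-constant. Differentiating and substituting $t \mapsto 4t$ gives parallel expansions for $F'(e^{-t})$, $F(e^{-4t})$, and $F'(e^{-4t})$ in the same basis $\{\Phi_+, \Phi_-\}$, translated and rescaled.

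\textbf{Ruling out an algebraic relation.} Suppose $P(z, F(z), F(z^4), F'(z), F'(z^4)) \equiv 0$ for some nonzero $P \in \mathbb{C}[z, X_1, X_2, Y_1, Y_2]$. Inserting the expansions above, after $z = e^{-t}$, the left-hand side becomes a finite sum $\sum_{(a,k)} t^{a + k \log_2 \varphi}\, \Psi_{a,k}(\log_4 t)$, with $\Psi_{a,k}$ a polynomial in translates of $\Phi_{\pm}$. Since $1$ and $\log_2 \varphi$ are linearly independent over $\mathbb{Q}$, each pair $(a,k)$ yields a distinct exponent, so every $\Psi_{a,k}$ must vanish identically in $\log_4 t$. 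The extremal-$k$ identities involve only pure products of a single one of $\Phi_+$ or $\Phi_-$, and the non-triviality from the previous step forces the corresponding coefficients of $P$ to vanish; iterating this argument on the remaining polynomial produces the contradiction.

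\textbf{Main obstacle.} The bulk of the work is not the bookkeeping above but verifying the qualitative claims about $\Phi_+$ and $\Phi_-$ -- that both are nonzero and non-constant, and more generally that no nontrivial polynomial identity holds among their $\log_4$-translates. This requires quantitative control of the amplitudes, plausibly through a Mellin-type integral representation arising from iterating the fixed-point equation satisfied by $H(t) := t^{\log_2 \varphi} F(e^{-t})$. Establishing such control is the analytic heart of the argument.
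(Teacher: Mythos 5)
Your plan differs fundamentally from the paper's, and it has two serious gaps that would prevent it from closing.

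\textbf{The two-mode expansion is not correct as stated, and the subdominant mode is not easy to reach.} Write $z=e^{-t}$. Exponentiating the paper's expansion of $\ln F(e^{-t})$ gives
\[
F(e^{-t}) \;=\; t^{-\lg\rho}\,D(t)\bigl(1 + c_1 t + O(t^{\alpha})\bigr), \qquad \alpha < 2\lg\rho\approx 1.388,
\]
where $D$ is the $1$-periodic (in $\lg t$) amplitude and $c_1\ne0$. The contribution of the second characteristic root $\rho^{-2}$ sits at relative order $t^{2\lg\rho}$, i.e.\ absolute order $t^{\lg\rho}$. But the coefficient-perturbation correction $c_1 D(t)\,t^{1-\lg\rho}$ is \emph{larger} than $t^{\lg\rho}$, since $1-\lg\rho\approx 0.306<\lg\rho\approx 0.694$. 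So an expansion of the form $t^{-\lg\rho}\Phi_+ + t^{\lg\rho}\Phi_- + O(t^\varepsilon)$ omits a strictly larger intermediate term and is false. Extracting $\Phi_-$ therefore requires pushing a full Mellin/Poincar\'e-type expansion past several intermediate poles; in particular Perron/Coffman-type theorems (which the paper invokes for a soft version of Theorem~\ref{Fasymptotics}) only control the dominant mode with relative error $O(t)$, which already swamps $\Phi_-$.

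\textbf{The ``ruling out'' step has no leverage.} Because $\Phi_+,\Phi_-$ are $1$-periodic in $\log_4 t$, the amplitudes appearing in $F(e^{-t})$ and $F(e^{-4t})$ are the \emph{same} functions of $\log_4 t$ (this is precisely $C(z)=C(z^4)$ in Theorem~\ref{Fasymptotics}). So ``translates of $\Phi_\pm$'' are trivial, and specialising to the extremal exponent collapses the hypothetical relation to a single scalar identity $\bigl(\sum_{\bm} C_{\bm}\,p_{\bm}(1)\bigr)\Phi_+(u)^{M}=0$: one equation, not enough to force vanishing of the $p_{\bm}$. Pushing to lower orders produces a polynomial in the pair $(\Phi_+(u),\Phi_-(u))$ vanishing along the curve $u\mapsto(\Phi_+(u),\Phi_-(u))$; to conclude that the polynomial is zero you would need Zariski density of that curve, i.e.\ algebraic independence of $\Phi_+$ and $\Phi_-$, which is itself a problem of comparable difficulty and is exactly what you defer to the ``main obstacle.''

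The paper sidesteps both problems. It uses \emph{only} the leading-order asymptotic from Theorem~\ref{Fasymptotics} but lets $z\to\xi\cdot 1^-$ for \emph{every} $4^n$-th root of unity $\xi$. Each $\xi$ produces an amplitude $\Omega(\xi)$ obeying the recursion~\eqref{AF2}, and the quotient $\omega(z)=\rho^2\Omega(z)/\Omega(z^4)$ satisfies the same continued-fraction identity~\eqref{ACF} as $\mu$. Plugging the leading asymptotics of the derivatives into the hypothetical relation and sorting by the quantity $\beta=(\lg\rho)|\bm|+m_1+m_3$ (this is where irrationality of $\lg\rho$ is used) yields a family of relations in $\Omega(\xi),\Omega(\xi^4)$ parametrised by $\xi$. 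The transcendence of $\omega$ (Lemma~\ref{lem1a}) and the algebraic nonexistence statement Theorem~\ref{th-tensor}, proved via eigenvalue/Fibonacci arguments in Section~\ref{s4}, then force all the $p_{\bm}$ to vanish. That roots-of-unity mechanism and the accompanying algebraic Theorem~\ref{th-tensor} are the key ideas missing from your plan, and they are what make the one-mode asymptotics sufficient.
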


\noindent
An application of Theorem~\ref{funcs} along with Mahler's powerful method implies the algebraic independence result for the values of the functions.

\begin{theorem}
\label{numbers}
For each non-zero algebraic number $\alpha$ inside the unit disc,
the numbers $F(\alpha)$, $F(\alpha^4)$, $F'(\alpha)$, and $F'(\alpha^4)$ are algebraically independent over $\mathbb{Q}$.
\end{theorem}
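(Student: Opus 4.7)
The plan is to derive Theorem \ref{numbers} from Theorem \ref{funcs} by invoking Ku.~Nishioka's celebrated theorem on algebraic independence of values of linear Mahler systems. In the form relevant here, it asserts: if an $n$-tuple of functions $\mathbf{f}(z) = (f_1, \dots, f_n)^T$ analytic at the origin with coefficients in a number field satisfies a functional equation $\mathbf{f}(z) = A(z) \mathbf{f}(z^k)$ with $A(z) \in \mathrm{Mat}_n(\overline{\mathbb{Q}}(z))$, and if $f_1, \dots, f_n$ are algebraically independent over $\mathbb{C}(z)$, then for every nonzero algebraic $\alpha$ in the unit disc at which each $\alpha^{k^j}$ ($j \ge 0$) avoids the poles of $A$ and satisfies $\det A(\alpha^{k^j}) \ne 0$, the numbers $f_1(\alpha), \dots, f_n(\alpha)$ are algebraically independent over $\mathbb{Q}$.

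First I would exhibit the vector $\mathbf{f}(z) = (F(z), F(z^4), F'(z), F'(z^4))^T$ as a solution of such a system with $k=4$. Two of the four rows are tautologies: $F(z^4)$ and $F'(z^4)$ are, respectively, the first and third components of $\mathbf{f}(z^4)$. The row for $F(z)$ is exactly \eqref{Fdefn} after identifying $F(z^{16})$ with the second component of $\mathbf{f}(z^4)$. The row for $F'(z)$ comes from differentiating \eqref{Fdefn} with respect to $z$,
$$F'(z) = (1+2z) F(z^4) + 4z^3(1+z+z^2) F'(z^4) - 4z^3 F(z^{16}) - 16 z^{19} F'(z^{16}),$$
and then identifying $F(z^{16})$ and $F'(z^{16})$ with the second and fourth components of $\mathbf{f}(z^4)$. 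The resulting $4 \times 4$ matrix $A(z)$ has polynomial entries and is block lower triangular, with $2\times 2$ diagonal blocks of determinants $z^4$ and $16 z^{19}$, giving $\det A(z) = 16 z^{23}$.

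The hypotheses of Nishioka's theorem are then easy to verify for any nonzero algebraic $\alpha$ with $|\alpha| < 1$: the polynomial matrix $A$ has no finite poles, and $\det A(\alpha^{4^j}) = 16 \alpha^{23 \cdot 4^j} \ne 0$ for every $j \ge 0$; the series $F(z)$ has coefficients in $\{0,1\}$ and is therefore analytic on $|z|<1$ together with its termwise derivative $F'(z)$, so every $\mathbf{f}(\alpha^{4^j})$ is well defined; and the algebraic independence over $\mathbb{C}(z)$ of the four components of $\mathbf{f}(z)$ is precisely Theorem \ref{funcs}. Nishioka's theorem thus yields the asserted algebraic independence over $\mathbb{Q}$. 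Given Theorem \ref{funcs}, no genuine obstacle remains; the only step with conceptual content is the observation that differentiating \eqref{Fdefn} closes up to a linear Mahler system in $F$, $F(\cdot^4)$, $F'$, and $F'(\cdot^4)$ with rational-function coefficients, so that derivatives can be incorporated directly into Mahler's framework.
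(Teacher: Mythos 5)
Your proposal is correct and follows essentially the same route as the paper: apply Ku.~Nishioka's theorem to the vector $(F(z), F(z^4), F'(z), F'(z^4))^T$, using Theorem~\ref{funcs} as the functional-independence input. The only cosmetic difference is that you write the Mahler system in the direction $\bbf(z)=A(z)\bbf(z^4)$ with a polynomial matrix $A$ and check $\det A(\alpha^{4^j})\ne 0$, whereas the paper records the inverse relation $\bbf(z^4)=\bB(z)\bbf(z)$ with $\bB$ a rational matrix and checks pole-avoidance; both conditions reduce to $\alpha\ne 0$, and the computations agree.
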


\noindent
Indeed, one expects the stronger version of algebraic independence of
the functions $F(z)$ and $F(z^4)$ along with {\em all} of their derivatives,
though the present methods seem inadequate for a result of this generality.

As alluded to in the above paragraphs, the novelty of our approach is the
avoidance of the hypertranscendence criterion of Ke.~Nishioka \cite{N1984}.
Ke.~Nishioka's criterion is very specialised and only applicable to Mahler
functions of degree one. In contrast, our method partly relies on understanding the
radial asymptotics of Mahler functions and can be applied to Mahler functions of
any degree. The analytical problem of determining this type of asymptotic
behaviour for Mahler functions is very classical, even for degree one Mahler
functions; e.g., see Mahler \cite{M1940}, de Bruijn \cite{dB1948}, Dumas \cite{D1993These},
and Dumas and Flajolet \cite{DF1996}. The importance of such asymptotics also appear
(though in a weaker form) in recent work of Adamczewski and Bell \cite{ABklMahler}.

In the case of $F(z)$, we prove the following result.

\begin{theorem}
\label{Fasymptotics}
As $z\to 1^-$, we have
$$
F(z)=\frac{C(z)}{(1-z)^{\lg\rho}}\cdot (1+O(1-z)),
$$
where $\lg$ denotes the base-$2$ logarithm, $\rho:=(1+\sqrt{5})/2$ denotes the golden ratio, and $C(z)$ is a positive
oscillatory term, which in the interval $(0,1)$ is bounded away from $0$ and $\infty$,
real analytic, and satisfies $C(z)=C(z^4)$.
\end{theorem}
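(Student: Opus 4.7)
My plan is to extract the asymptotic of $F(z)$ near $z=1$ through the ratio $R(z) := F(z)/F(z^4)$. Dividing~(\ref{Fdefn}) by $F(z^4)$ yields the self-referential functional equation
$$R(z) = (1+z+z^2) - \frac{z^4}{R(z^4)}, \qquad R(0) = 1,$$
and the non-negativity of the Taylor coefficients of $F$ gives $1 \le R(z) \le 3$ on $[0,1)$. Telescoping produces the convergent infinite product $F(z) = \prod_{k=0}^\infty R(z^{4^k})$. Any limit $L = \lim_{z \to 1^-} R(z)$ must satisfy $L^2 - 3L + 1 = 0$, so $L \in \{\rho^2, \rho^{-2}\}$, and the lower bound $R \ge 1$ rules out $\rho^{-2} < 1$, forcing $R(z) \to \rho^2$.

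The heart of the argument is the \emph{quantitative} sharpening $R(z) = \rho^2 + O(1-z)$ as $z \to 1^-$. Writing $R = \rho^2 + r$ and using $\rho^4 = 3\rho^2 - 1$, the functional equation linearizes to
$$r(z) = c\,(1-z) + \rho^{-4} r(z^4) + O((1-z)^2) + O(r(z^4)^2), \qquad c = 4\rho^{-2} - 3.$$
Iterating this relation yields $r(z) = c \sum_{k \ge 0} \rho^{-4k}(1 - z^{4^k}) + (\text{higher order})$. The crucial contraction inequality $4/\rho^4 < 1$ (equivalently $\rho^4 > 4$) controls the sum: for indices $k$ with $4^k(1-z) \le 1$ the $k$-th term is bounded by $(4/\rho^4)^k (1-z)$, while the remaining tail sums to $O((1-z)^{2 \lg \rho}) = o(1-z)$. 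A standard bootstrap (progressively strengthening $r \to 0$ to $r = o(1)$ to $r = O(1-z)$) handles the nonlinear corrections; this is the main obstacle.

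With $R(z) = \rho^2 + O(1-z)$ in hand I set $\tilde F(z) := (1-z)^{\lg \rho} F(z)$ and exploit the identity $4^{\lg \rho} = \rho^2$. Since $(1-z)/(1-z^{1/4}) = 1 + z^{1/4} + z^{1/2} + z^{3/4} = 4 + O(1-z)$ and $F(z)/F(z^{1/4}) = 1/R(z^{1/4})$, a direct computation gives
$$\frac{\tilde F(z)}{\tilde F(z^{1/4})} = \frac{\bigl((1-z)/(1-z^{1/4})\bigr)^{\lg \rho}}{R(z^{1/4})} = 1 + O(1-z).$$
Iterating this ratio estimate and using the geometric summability $\sum_{k \ge 0}(1 - z^{4^{-k}}) = O(1-z)$ gives $\tilde F(z)/\tilde F(z^{4^{-n}}) = 1 + O(1-z)$ uniformly in $n$, so the limit
$$C(z) := \lim_{n \to \infty} \tilde F(z^{4^{-n}})$$
exists for each $z \in (0,1)$ and yields the claimed asymptotic $\tilde F(z) = C(z)(1 + O(1-z))$. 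The identity $C(z^4) = C(z)$ is immediate from re-indexing the limit; positivity and boundedness away from $0$ and $\infty$ follow from $R \in [1,3]$ combined with the rate $R(z) = \rho^2 + O(1-z)$ applied term-by-term to the product $F = \prod R$; real-analyticity comes from uniform convergence of the real-analytic approximants $\tilde F(z^{4^{-n}})$ on compact subsets of $(0,1)$.
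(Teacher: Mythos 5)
Your telescoping approach is genuinely different from the paper's: where you iterate the multiplicative relation $\tilde F(z)/\tilde F(z^{1/4}) = 1 + O(1-z)$ directly, the paper passes to $\ln F(e^{-t})$, takes a Mellin transform, analytically continues past $\Re(s)=0$, and reads off the leading asymptotic from residues at the poles of $\cM(s)/(1-4^{-s})$. Your route is more elementary and conceptually cleaner for the leading-order statement; the paper's Mellin analysis buys more --- explicit constants $c_0$, $c_1$, the precise oscillatory Fourier expansion $\sum_k a_k(t)$, and the sharp error exponent $2\lg\rho$ in place of your $O(1-z)$, all of which are reused in Sections 3 and 5.

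That said, two of your steps are gaps rather than sketches. First, the claim ``real-analyticity comes from uniform convergence of the real-analytic approximants $\tilde F(z^{4^{-n}})$ on compact subsets of $(0,1)$'' is false as a principle: uniform convergence of real-analytic functions on a real interval does \emph{not} force the limit to be real-analytic (the Weierstrass approximation of $|x|$ by polynomials is the standard counterexample). What you actually need is uniform convergence on a \emph{complex} neighborhood of each compact subinterval, and then Weierstrass's theorem applies. To get that, your estimate $R(z) = \rho^2 + O(1-z)$ has to be established for complex $z$ in a shrinking sectorial neighborhood of the real segment, with the radius controlled relative to $1-z$; this is a nontrivial additional argument that your write-up does not attempt. (Even the paper's primary Mellin proof elides this; the rigorous justification appears only via the difference-equation route in Section 5, invoking analytic dependence of Poincar\'e-type recurrences on initial data.) Second, the step from $R(z)\to\rho^2$ to the quantitative $R(z) = \rho^2 + O(1-z)$ is announced as ``a standard bootstrap'' but is in fact the technical heart of the whole argument, as you concede. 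The linearized model $r(z) = c(1-z) + \rho^{-4}r(z^4) + O((1-z)^2) + O(r(z^4)^2) + O((1-z)|r(z^4)|)$ is correct and the contraction $4\rho^{-4}<1$ is the right mechanism, but the induction along orbits $z_k = z_0^{1/4^k}$ must first be run with a weaker hypothesis ($r=o(1)$) to tame the quadratic error $O(r(z^4)^2)$, with the constant $K$ and the starting threshold $z_0$ chosen consistently; the paper instead bounds $\mu''$ (Lemma~\ref{lemma:mu_approx}) and integrates twice, which sidesteps the nonlinearity entirely. Until you fill in one of these routes, the quantitative asymptotic is not proved. The remaining pieces --- the telescoping identity, the exponent check $4^{\lg\rho}=\rho^2$, the geometric summability $\sum_j(1-z^{4^{-j}}) = O(1-z)$, and the periodicity $C(z^4)=C(z)$ --- are all correct.
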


This paper is organised as follows. In Section~\ref{sec:asymp} we prove Theorem~\ref{Fasymptotics} by
a careful study of the continued fraction for $F(z)/F(z^4)$.
In Section~\ref{s3} we use this knowledge to establish Theorem~\ref{funcs}:
assuming a polynomial relation in $F(z)$, $F(z^4)$, $F'(z)$, and $F'(z^4)$
the asymptotic behaviour of $F(z)$ as $z\to1^-$ allows us to significantly shorten it;
then using a linear algebra argument, we show that this reduced algebraic relation is not possible.
The related algebraic statement, Theorem~\ref{th-tensor}, is proved in generality in Section~\ref{s4}.
The derivation of Theorem~\ref{numbers} from Theorem~\ref{funcs} is
performed at the end of Section~\ref{s3}. Finally, in Section~\ref{s5} we discuss an
alternative proof of Theorem~\ref{Fasymptotics} that can be used in the asymptotical study of
general Mahler functions at arbitrary roots of unity.

We would like to point out that the methods of the paper apply
with no difficulty to the `satellite' function $G(z)$ (for definitions and related results
see Dilcher and Stolarsky \cite{DS2009}, Adamczewski \cite{A2010}, and Bundschuh and V\"a\"an\"anen \cite{BV2014a,BV2014b}),
so that all three theorems above remain true when we replace $F(z)$
in their statements with $G(z)$.

\section({A continued fraction related to \$F(z)\$ and asymptotics})%
{A continued fraction related to $F(z)$ and asymptotics}
\label{sec:asymp}

In this section, we prove Theorem \ref{Fasymptotics} as stated in the
introduction. In order to carry out our method, it is useful to define the
auxiliary function $\mu\colon[0,1)\to\mathbb{R}$ given by
\begin{equation}
\label{eq:mudef}
\mu(z) := \frac{F(z)}{F(z^4)}\,\raisedot
\end{equation}
{From}~\eqref{Fdefn} and~\eqref{eq:mudef}, $\mu(z)$ satisfies
the recurrence
\begin{equation}
\label{eq:murec}
\mu(z) = 1 + z + z^2 - \frac{z^4}{\mu(z^4)}\,\raisedot
\end{equation}
Our strategy is to analyse the asymptotic behaviour of $\mu(z)$ and then
deduce the corresponding behaviour of $F(z)$.

Note that $\mu(z)$ may be written as a continued fraction
\[
\mu(z) = 1 + z + z^2 - \dfrac{z^4}{1 + z^4 + z^{2\cdot 4} - \dfrac{z^{4^2}}{1 + z^{4^2} + z^{2\cdot4^2} - {\atop\ddots}}}.
\]
Also, from~\eqref{eq:mudef}, $F(z)$ is given by the infinite product
\begin{equation*}
F(z) = \prod_{k=0}^\infty \mu(z^{4^k}).
\end{equation*}
In this sense we have an `explicit solution' for $F(z)$ as an infinite product of continued fractions.

Before continuing, we make some remarks on notation. Since logarithms to different bases
occur naturally in the analysis, we write $\ln x$ for the natural logarithm and $\lg x$ for the logarithm to the base~$2$.
As in the statement of Theorem~\ref{Fasymptotics}, we define $\rho := (1+\sqrt{5})/2 \approx 1.618$ to be the golden ratio, and note
that $\rho^2 = \rho + 1$.

The following few lemmas provide the needed background for the proof of Theorem~\ref{Fasymptotics}
concerning the asymptotics of $F(z)$ as $z\to 1^-$.

\begin{lemma}
\label{lemma:Fproperties}
The power series \[F(z) = \sum_{n=0}^\infty c_n z^n\]
has coefficients $c_n \in \{0,1\}$.  Also,
$F(z)$ is strictly monotone increasing and unbounded for $z \in [0,1)$,
and cannot be analytically continued past the unit circle.
\end{lemma}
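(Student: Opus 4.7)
My plan is to address the three assertions of the lemma in order, using the functional equation~\eqref{Fdefn} as the main tool.

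For the first assertion ($c_n\in\{0,1\}$), which is due to Dilcher and Stolarsky~\cite{DS2009}, I will match coefficients of $z^n$ on both sides of $F(z)=(1+z+z^2)F(z^4)-z^4F(z^{16})$. This produces the rules $c_{4k+3}=0$, $c_{4k+1}=c_{4k+2}=c_k$, and $c_{4k}=c_k$ except when $4k\equiv 4\pmod{16}$; in the exceptional case $c_{16j+4}=c_{4j+1}-c_j=0$ by the rule $c_{4j+1}=c_j$ just derived. Starting from $c_0=1$, induction on $n$ yields $c_n\in\{0,1\}$.

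For the second assertion, strict monotonicity follows from nonnegativity of the coefficients together with $c_1=1$: for $0\le z_1<z_2<1$ the difference $F(z_2)-F(z_1)\ge c_1(z_2-z_1)>0$. Consequently $L:=\lim_{z\to 1^-}F(z)\in[1,\infty]$ exists, and if $L$ were finite, letting $z\to 1^-$ in the functional equation would yield $L=3L-L=2L$, whence $L=0$, contradicting $L\ge F(0)=1$. Hence $L=\infty$, proving unboundedness.

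The third assertion---that the unit circle is a natural boundary---is the main obstacle. My plan is to show $F$ is singular at every $4^k$-th root of unity; since these form a dense subset of the unit circle, the conclusion follows. The base case $k=0$ is part~two. The inductive step will assume $F$ is analytic in a neighbourhood of a $4^k$-th root of unity $\zeta$ and derive a contradiction: evaluating the functional equation along the radial ray $z=\zeta(1-t)$ as $t\to 0^+$, the left-hand side remains bounded by the analyticity hypothesis, while the right-hand side involves $F((1-t)^4)$ and $F((1-t)^{16})$, both tending to infinity along the positive real axis by part~two. The hard part will be balancing these diverging contributions; I expect to use the polynomial upper bound $F(r)=O\bigl((1-r)^{-\beta}\bigr)$ with $\beta=\tfrac12\lg 3<1$, obtained by iterating $F(r)\le 3F(r^4)$ until $r^{4^k}\le\tfrac12$, which is enough to control the error terms that arise when subtracting the functional equation evaluated at $1-t$ from that evaluated at $\zeta(1-t)$. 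An attractive alternative would be to cite the folklore fact that a transcendental Mahler function with radius of convergence~$1$ has the unit circle as its natural boundary, and invoke the transcendence of $F$ proved by Coons~\cite{C2010}.
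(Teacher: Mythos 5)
Your handling of the first two assertions is correct and in fact more detailed than the paper, which simply cites \cite{BV2014a,DS2009} for $c_n\in\{0,1\}$ and says monotonicity and unboundedness ``follow easily.'' Your coefficient recursion from the functional equation is right (including the exceptional case $c_{16j+4}=c_{4j+1}-c_j=0$), and your trick of deriving unboundedness by letting $z\to1^-$ in the functional equation ($L=2L$, hence $L=0$, contradicting $L\ge1$) is a nice alternative to the paper's implicit argument via infinitely many nonzero coefficients.

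For the natural boundary, however, your primary plan has a genuine gap. Iterating $F(r)\le 3F(r^4)$ gives only an \emph{upper} bound $F(r)=O((1-r)^{-\beta})$, and this bound has the same order for both $F((1-t)^4)$ and $F((1-t)^{16})$. Since these two terms diverge at the same rate (indeed $F(r)/F(r^4)=\mu(r)\to\rho^2$, a finite constant), the divergences on the right-hand side of the functional equation could in principle cancel, and no polynomial-growth estimate can rule this out; you need information about the \emph{ratio} of the two diverging terms, not just their individual sizes. Concretely, the subtraction you propose leads (for $\zeta$ a fourth root of unity, the only case you can execute directly) to an identity of the form $\mu(1-t)\to 2-\zeta-\zeta^2$, and ruling this out requires knowing that $\mu(1-t)$ does not tend to $2$ -- which is essentially the content of Lemma~\ref{lemma:special_values}, not of any crude upper bound. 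Moreover, for $4^k$-th roots of unity with $k\ge 2$ you would need to iterate the functional equation and control the iterated coefficient polynomials, which you have not addressed. The paper sidesteps all of this by citing \cite[Theorem~1.1]{BV2014a} for a full proof. Your fallback -- invoking Coons's transcendence of $F$ together with the theorem (due to Rand\'e) that a non-rational Mahler function has the unit circle as a natural boundary -- is a sound alternative route, and in fact is closer in spirit to a rigorous argument than your primary sketch.
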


\begin{proof}
Since the coefficients $c_n$ are in $\{0,1\}$ (see \cite{BV2014a,DS2009}) and
infinitely many are nonzero, the strict monotonicity and unboundedness of
$F(z)$ follow easily. Thus, $F(z)$ has a singularity at $z=1$.

{From} the functional equation~\eqref{Fdefn} it follows that $F(z)$ has a
singularity at $z = e^{2\pi i/2^k}$ for all nonnegative integers $k$. Thus,
there is a dense set of singularities on the unit circle, so the unit circle is a
natural boundary. See also Bundschuh and V\"a\"an\"anen \cite[Theorem 1.1]{BV2014a}
for a proof of the last part of the lemma.
\end{proof}

\begin{lemma}
\label{lemma:special_values}
If $z\in[0,1)$, then $\mu(z)\ge 1$. Moreover, if $\mu_1 := \lim_{z \to 1^-}\mu(z)$
and $\mu_1' := \lim_{z \to 1^-}\mu'(z)$, then
\begin{equation*}
\mu_1 = \frac{3+\sqrt{5}}{2} = \rho^2 \approx 2.618
\qquad\text{and}\qquad
\mu_1' = \frac{21+8\sqrt{5}}{11} \approx 3.535.
\end{equation*}
\end{lemma}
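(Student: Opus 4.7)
The proof has three distinct pieces, in increasing order of difficulty.

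First, for the lower bound $\mu(z)\ge 1$ on $[0,1)$, I would simply observe that $z^4\le z$ on this interval and $F$ is monotone increasing (Lemma~\ref{lemma:Fproperties}), so $F(z^4)\le F(z)$ and $\mu(z)=F(z)/F(z^4)\ge 1$.

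Next, for the value of $\mu_1$, the formal calculation is easy: letting $z\to 1^-$ in the functional equation~\eqref{eq:murec} gives $\mu_1=3-1/\mu_1$, a quadratic with roots $(3\pm\sqrt 5)/2=\rho^{\pm 2}$; the root $\rho^{-2}<1$ is excluded by the first part, leaving $\mu_1=\rho^2$. The real content is \emph{existence} of the limit. I would set $\phi(z):=\mu(z)-\rho^2$ and, using $\rho^2=3-1/\rho^2$, rewrite~\eqref{eq:murec} as
\[
\phi(z)=\varepsilon(z)+\frac{\phi(z^4)}{\rho^2\mu(z^4)}, \qquad \varepsilon(z):=(z-1)(z+2)+\frac{(1-z)(1+z+z^2+z^3)}{\mu(z^4)}.
\]
Here $\varepsilon(z)=O(1-z)$ as $z\to 1^-$, while $\mu(z^4)\ge 1$ everywhere. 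Iterating along the sequence $z_0,z_1,z_2,\dots$ with $z_{n+1}=z_n^4$, and using that $\mu$ is bounded on $[0,1]$ (since $\mu(z)\le 1+z+z^2\le 3$) so the remainder term $\phi(z_N)/(\rho^{2N}\prod_{k=1}^N\mu(z_k))$ vanishes as $N\to\infty$, yields the absolutely convergent series representation
\[
\phi(z_0)=\sum_{n=0}^\infty\frac{\varepsilon(z_n)}{\rho^{2n}\prod_{k=1}^n\mu(z_k)}\raisedot
\]
The summands are dominated uniformly in $z_0$ by a constant multiple of $\rho^{-2n}$; meanwhile, for each fixed $n$, $z_n=z_0^{4^n}\to 1^-$ as $z_0\to 1^-$, so $\varepsilon(z_n)\to 0$. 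Dominated convergence then gives $\phi(z_0)\to 0$, i.e.\ $\mu(z)\to\rho^2$.

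Finally, for $\mu_1'$, I would differentiate~\eqref{eq:murec} to obtain
\[
\mu'(z)=1+2z-\frac{4z^3}{\mu(z^4)}+\frac{4z^7\mu'(z^4)}{\mu(z^4)^2},
\]
pass formally to the limit (using $\mu(z^4)\to\rho^2$) to solve the linear equation $\mu_1'=3-4/\rho^2+4\mu_1'/\rho^4$, and simplify using $\rho^2=(3+\sqrt5)/2$ and $\rho^4=(7+3\sqrt5)/2$ to obtain
\[
\mu_1'=\frac{\rho^2(3\rho^2-4)}{\rho^4-4}=\frac{21+8\sqrt5}{11}\raisedot
\]
The existence of this limit is established by exactly the same iteration-and-dominated-convergence scheme as above, the crucial point now being that the effective contraction factor $4/\rho^4\approx 0.58$ is still strictly less than $1$, so an analogous telescoping series converges absolutely and term-by-term to $0$ as $z\to 1^-$.

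The main obstacle in all of this is the existence of the limits; the arithmetic is forced, but justifying that the naive fixed-point equations actually capture the asymptotics requires the geometric-decay-plus-dominated-convergence argument for the tower $z,z^4,z^{16},\dots$, which is the single technical heart of the lemma.
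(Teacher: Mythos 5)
Your proposal is correct in its essential content, and the overall strategy (exploit the $z\mapsto z^4$ recursion, show a contraction, pass to the limit) is the same as the paper's, but the execution is genuinely different and in one respect cleaner. For $\mu(z)\ge 1$ you argue exactly as the authors do. For $\mu_1$, the paper takes $z_k=z_0^{1/4^k}$ (a sequence marching toward $1$) and compares $y_k=\mu(z_k)$ with the $z$-dependent fixed point $Q(z_k)$ of the one-step map, showing $|Q_k-y_k|\to 0$ by contraction; this only directly gives the limit along the subsequence $z_0^{1/4^k}$ for each fixed $z_0$, and a small uniformity argument is needed to upgrade it. Your version linearises around the constant $\rho^2$, unrolls the recursion along $z_{n+1}=z_n^4$ (marching toward $0$), obtains the exact series $\phi(z_0)=\sum_n \varepsilon(z_n)/(\rho^{2n}\prod_{k\le n}\mu(z_k))$, and applies Tannery/dominated convergence. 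Because $1/(\rho^2\mu(z^4))\le\rho^{-2}<1$ \emph{uniformly} on $[0,1)$ (using only $\mu\ge 1$), the dominating series is manifestly summable and you get the genuine two-sided limit $z\to 1^-$ in one stroke. That is a modest but real advantage. (Note also that your sign in $\phi(z)=\varepsilon(z)+\phi(z^4)/(\rho^2\mu(z^4))$ is in fact correct; I checked.)

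For $\mu_1'$, however, your phrase ``exactly the same iteration-and-dominated-convergence scheme'' skips over a genuine wrinkle. After differentiating, the iteration coefficient is $B(z)=4z^7/\mu(z^4)^2$, and unlike the $\mu_1$ case the trivial bound $\mu\ge 1$ only gives $B(z)\le 4z^7\le 4$, not a uniform contraction on $[0,1)$. The limiting value $4/\rho^4\approx 0.58$ is $<1$, but to run Tannery you need the partial products $\prod_{j<n}B(z_j)$ to be dominated \emph{uniformly in} $z_0$ by a summable sequence. This can be repaired (e.g.\ by noting that $B(z)<1-\delta$ for $z$ in a neighbourhood of $1$ by the already-proved part, that $B(z)\le 4z^7<1$ for $z<(1/4)^{1/7}$, and that the trajectory $z_0^{4^j}$ passes through the remaining compact ``bad'' interval only a bounded number of times independent of $z_0$), but it is not literally the same uniform estimate as before. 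For what it is worth, the paper is equally terse here: it records the fixed-point equation and the contraction constant $4/\mu_1^2<1$ and writes ``we omit the details.'' So your level of rigour matches the source, but the claim that the two parts are ``exactly the same'' is the one spot I would tighten.
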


\begin{proof}
Suppose that $z \in [0,1)$.
Since $F(z)$ is monotonic increasing on $[0,1)$,
we have $F(z) \ge F(z^4) \ge1$, so $\mu(z) \ge1$.

Define
\[
Q(z) := \frac{1+z+z^2 + \sqrt{(1+z+z^2)^2-4z^4}}{2}
\]
to be the larger root of
\[Q(z) = 1+z+z^2 - \frac{z^4}{Q(z)}\,\raisedot\]
Observe that $Q(z)$ is a continuous monotone increasing function on $[0,1]$,
and $Q(1) = (3+\sqrt{5})/2$.

Take an arbitrary $z_0 \in (0,1)$, and define $z_k := z_0^{1/4^k}$,
so $z_{k-1} = z_k^4$ for $k \ge1$ and $\lim_{k\to\infty} z_k = 1$.
For notational convenience, we also define
$y_k := \mu(z_k)$ and $Q_k := Q(z_k)$; in particular,
\[
Q_k = 1 + z_k + z_k^2 - z_k^4/Q_k
\]
and
\[
y_k = 1 + z_k + z_k^2 - z_k^4/y_{k-1}
\]
from the functional equation~\eqref{eq:murec}.
Since $\lim_{k\to\infty}Q_k = Q(1) > 2$,
we can assume that $k_0\ge1$ is sufficiently large that $Q_k \ge 2$ for all $k\ge k_0$.
Thus
\begin{align*}
|Q_k-y_k|
&= |z_k^4(y_{k-1}^{-1} - Q_k^{-1})|
= \biggl|\frac{z_k^4(Q_k-y_{k-1})}{Q_ky_{k-1}}\biggr|
\\
&\le\frac{|Q_k-y_{k-1}|}{2}
\le\frac{|Q_{k-1}-y_{k-1}|}{2}+\frac{|Q_k-Q_{k-1}|}{2}\,\raisecomma
\end{align*}
using $|z_k| \le 1$, $y_{k-1} \ge1$, $|Q_k| \ge2$, and the triangle inequality.
It follows from $\lim_{k\to\infty}(Q_k-Q_{k-1}) = 0$ that $\lim_{k\to\infty}(Q_k-y_k) = 0$.
Thus $\lim_{k\to\infty}y_k = Q(1)$, which completes the proof of $\mu_1=\rho^2$.

Differentiating each side of the recurrence~\eqref{eq:murec} gives
\begin{equation}
\label{eq:muprime1}
\mu'(z) = 1 + 2z - \frac{4z^3}{\mu(z^4)} + \frac{4z^7\mu'(z^4)}{\mu(z^4)^2}.
\end{equation}
As $z\to 1^-$,
\[
1+2z-\frac{4z^3}{\mu(z^4)} \to 3-\frac4{\mu_1}
\qquad\text{and}\qquad
\frac{4z^7}{\mu(z^4)^2} \to \frac4{\mu_1^2},
\]
so~\eqref{eq:muprime1} may be written as
\begin{equation*}
\mu'(z) = 3 - 4/\mu_1 + o(1) + (4/\mu_1^2 + o(1))\mu'(z^4).
\end{equation*}
Using the latter expression,
it can be shown that $\mu'(z) \to \mu_1'$, where $\mu_1'$ satisfies
\[
\mu_1' = 3 - 4/\mu_1 + (4/\mu_1^2)\mu_1',
\]
so
\[
\mu_1' = \frac{3-4/\mu_1}{1-4/\mu_1^2} = \frac{21 + 8\sqrt{5}}{11}\,\raisedot
\]
We omit the details, but note that $|4/\mu_1^2| < 1$, so the iteration
\[
m_k = 3 - 4/\mu_1 + (4/\mu_1)^2 m_{k-1}
\]
converges, and $\lim_{k\to\infty} m_k = \mu_1'$.
\end{proof}

In view of Lemma~\ref{lemma:special_values}, we
define by continuity $\mu(1) := \mu_1$ and $\mu'(1) := \mu_1'$.
Since $\mu''(z)$ is unbounded as $z \to 1^-$, this
process cannot be continued; see Figure \ref{mugraph} for a graph of $\mu(z)$ and
$\mu'(z)$ for $z\in[0,1)$.

\begin{figure}[htdp]
\includegraphics[width=4.7in,height=1.7in]{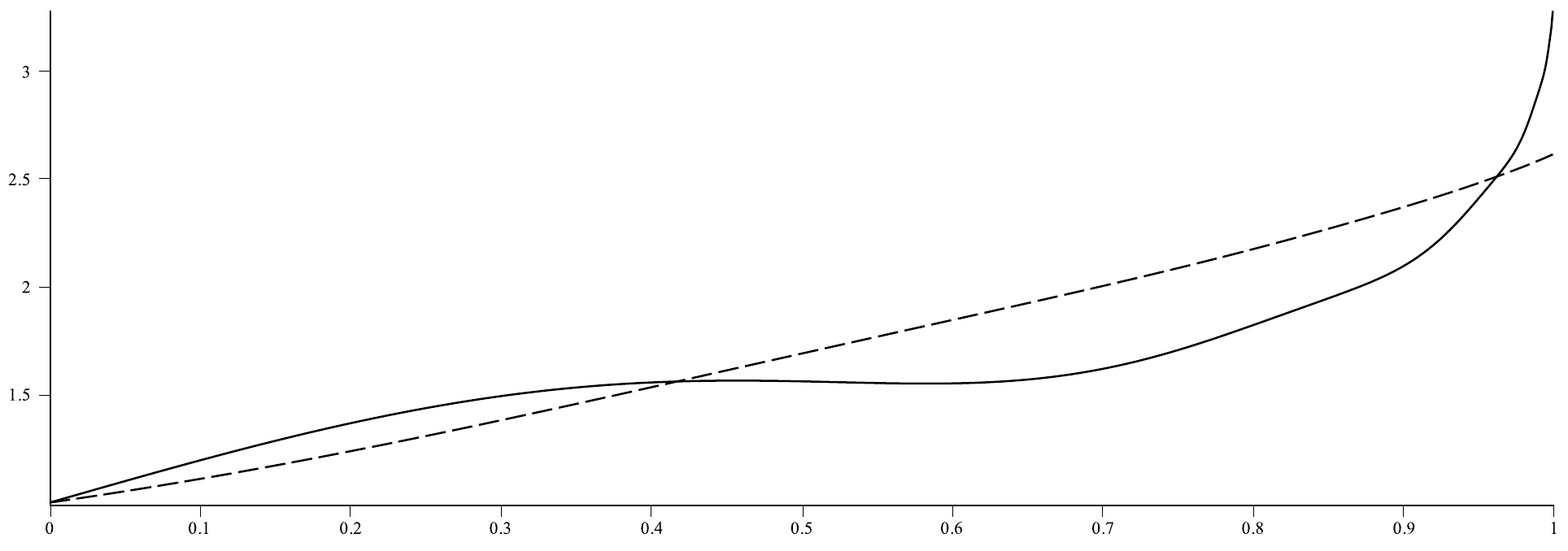}
\caption{The functions $\mu(z)$ (dashed) and $\mu'(z)$ (solid) for $z\in[0,1)$.}
\label{mugraph}
\end{figure}

\begin{lemma}
\label{lemma:mu_approx}
Let $\alpha$ be any constant satisfying $\alpha < 2\lg\rho \approx 1.388$.
Then, for $t \in (0,\infty)$, we have
\begin{equation}
\label{eq:muppineq1}
\mu''(e^{-t}) = O(t^{\alpha-2})
\end{equation}
and
\begin{equation}
\label{eq:muineq}
\mu(e^{-t}) = \mu_1 - t\mu'_1 + O(t^{\alpha}).
\end{equation}
\end{lemma}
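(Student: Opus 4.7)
My plan is to establish part (a) first by iterating a doubly-differentiated version of~\eqref{eq:murec}, and then to deduce part (b) by integrating the bound from (a). Throughout, write $z = e^{-t}$ and set $P(t) := \mu''(e^{-t})$. Differentiating~\eqref{eq:murec} twice gives
\[
\mu''(z) = A(z) + B(z)\,\mu''(z^4),
\]
where $A(z)$ is a rational expression in $z$, $\mu(z^4)$, and $\mu'(z^4)$, and $B(z) = 16 z^{10}/\mu(z^4)^2$. By Lemma~\ref{lemma:special_values}, $B(z) \to q := 16/\mu_1^2 = 16/\rho^4$ and $A(z) \to A_0$ as $z \to 1^-$, for an explicit constant $A_0$. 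Since $\rho^4 = 3\rho + 2$, one has $q > 1$, so the iteration is expanding---which is exactly why $\mu''$ blows up. The exponent driving the bound comes from the identity $\log_4 q = 2 - 2\lg\rho$.

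Before iterating the $\mu''$-recurrence, I need a rate for $\mu'(z) \to \mu_1'$. Differentiating~\eqref{eq:murec} once and subtracting the fixed-point equation $\mu_1' = (3 - 4/\mu_1) + \tilde q\,\mu_1'$, with $\tilde q := 4/\mu_1^2 < 1$, yields
\[
\mu'(e^{-t}) - \mu_1' = \tilde q\bigl(\mu'(e^{-4t}) - \mu_1'\bigr) + O(t).
\]
Iterating $n$ times with $n \asymp -\log_4 t$, so that $4^n t$ lies in a fixed compact interval on which $\mu'(e^{-\cdot})$ is bounded, and using $4\tilde q = q$, the geometric sum yields $\mu'(e^{-t}) - \mu_1' = O(t\,q^n) + O(\tilde q^n) = O(t^{2\lg\rho - 1})$, since $q^n \asymp t^{2\lg\rho - 2}$ and $\tilde q^n \asymp t^{2\lg\rho - 1}$. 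Substituting this in gives $A(z) = A_0 + O(t^{2\lg\rho - 1})$ and $B(z) = q + O(t)$, so the doubly-differentiated recurrence becomes
\[
P(t) = A_0 + O(t^{2\lg\rho - 1}) + (q + O(t))\,P(4t).
\]
Telescoping $n$ iterations,
\[
|P(t)| \le |P(4^n t)|\prod_{j=0}^{n-1}\bigl(q + O(4^j t)\bigr) + \sum_{k=0}^{n-1} O(1)\prod_{j=0}^{k-1}\bigl(q + O(4^j t)\bigr).
\]
Since $\sum_{j=0}^{n-1} 4^j t \le 4^n t/3 = O(1)$, each product is $O(q^k)$, hence the right-hand side is $O(q^n) = O(t^{2\lg\rho - 2})$. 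For any $\alpha < 2\lg\rho$ one has $t^{2\lg\rho - 2} = O(t^{\alpha - 2})$ as $t \to 0^+$, proving~\eqref{eq:muppineq1}.

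For (b), the statement weakens as $\alpha$ decreases, so I may assume $\alpha \in (1, 2\lg\rho)$. Taylor's theorem with integral remainder at $z = 1$ gives
\[
\mu(e^{-t}) = \mu_1 + \mu_1'(e^{-t} - 1) + \int_{e^{-t}}^{1}(s - e^{-t})\,\mu''(s)\,ds.
\]
The leading term expands as $-\mu_1' t + O(t^2)$. For the remainder, substituting $s = e^{-u}$ and using $s - e^{-t} \le t - u$ together with $|\mu''(e^{-u})| = O(u^{\alpha - 2})$ from (a) reduces the estimate to the Beta-type integral $\int_0^t (t - u)\,u^{\alpha - 2}\,du$, which is a constant multiple of $t^\alpha$. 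This yields~\eqref{eq:muineq}.

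The step requiring most care is controlling the multiplicative errors in the telescoping: showing $\prod_{j<n}\bigl(q + O(4^j t)\bigr) = O(q^n)$ reduces to the bound $\sum_{j<n} 4^j t = O(1)$, which holds automatically because the iteration stops at $4^n t = O(1)$. The remainder of the argument is routine bookkeeping with geometric sums.
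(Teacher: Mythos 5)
Your proof is correct and follows essentially the same approach as the paper: differentiate \eqref{eq:murec} twice to get the recurrence $\mu''(e^{-t}) = A(t) + B(t)\,\mu''(e^{-4t})$ with $B(t) \to 16/\mu_1^2 > 1$, iterate roughly $-\log_4 t$ times until $4^n t$ lands in a fixed compact interval, and observe that the accumulated growth is $q^n \asymp t^{2\lg\rho-2}$ because $\log_4(16/\mu_1^2) = 2 - 2\lg\rho$; then integrate twice for \eqref{eq:muineq}. The paper packages the iteration as an induction on $t_k = t_0/4^k$ with a $\delta$-slack $1 - (16/\mu_1^2)\,4^{\alpha-2} > 0$ absorbing the errors, whereas you telescope explicitly and bound the multiplicative errors by $\exp\bigl(O(\sum_j 4^j t)\bigr) = O(1)$; your version is marginally cleaner, actually yields the sharp endpoint $O(t^{2\lg\rho-2})$ directly (which the paper mentions but omits), and makes the uniformity in $t$ a bit more visible. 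Two small remarks: the intermediate rate $\mu'(e^{-t}) - \mu_1' = O(t^{2\lg\rho-1})$ you derive is not needed for the argument --- the paper just uses that $A(t)$ is uniformly bounded, and that already gives $\sum_k O(1)\,q^k = O(q^n)$; and your use of Taylor's theorem with integral remainder is a slightly more careful rendering of the paper's terse ``integrate twice over $[0,t]$,'' correctly noting that one needs $\alpha > 1$ so that $\int_0^t(t-u)u^{\alpha-2}\,du$ converges, which is harmless since the statement weakens as $\alpha$ decreases.
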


\begin{proof}
Let $z = e^{-t} \in (0,1)$.
Differentiating both sides of~\eqref{eq:muprime1} with respect to~$z$ gives
\begin{equation}
\label{eq:mupp}
\mu''(e^{-t}) = A(t) + B(t)\mu''(e^{-4t}),
\end{equation}
where $A(t)$ is uniformly bounded, say $|A(t)| \le A$,
and
\begin{equation}
\label{eq:Bt}
B(t) = \frac{16e^{-10t}}{\mu(e^{-4t})^2} = \frac{16}{\mu_1^2} + O(t)
\end{equation}
as $t \to 0^+$.

We now prove by induction on $k \ge 0$ that, if $t_0$ is sufficiently small,
$t_k := t_0/4^k$, and $C$ is sufficiently large, then
\begin{equation}
\label{eq:mupp_induction}
\mu''(e^{-t_k}) < Ct_k^{\alpha-2}
\end{equation}
holds for all $k\ge 0$.

By the choice of $\alpha$, we have
$$
\delta := 1 - \frac{16}{\mu_1^2}\cdot 4^{\alpha-2} > 0.
$$
By~\eqref{eq:Bt}, there exists $\varepsilon > 0$ such that,
for all $t \in (0,\varepsilon)$, we have
$B(t) < (16/\mu_1^2)(1+\delta)$.  Thus, for all $t \in (0,\varepsilon)$,
\[
4^{\alpha-2}B(t) < (1-\delta)(1+\delta) = 1-\delta^2.
\]
For an arbitrary $t_0 \in (0,\varepsilon)$, choose
\begin{equation}
\label{eq:K}
C > \max\{\mu''(e^{-t_0})/t_0^{\alpha-2},	A/\delta^2\}.
\end{equation}
Thus $\mu''(e^{-t_0}) < Ct_0^{\alpha-2}$, so the
inductive hypothesis~\eqref{eq:mupp_induction} holds for $k=0$.
Suppose that it holds for some $k \ge 0$.
Then from~\eqref{eq:mupp},
\begin{align*}
\mu''(e^{-t_{k+1}})
&= \mu''(e^{-t_k/4})
\le A(t_k/4) + B(t_k/4)\mu''(e^{-t_k})
\\
&< A + 4^{2-\alpha}(1-\delta^2)Ct_k^{\alpha-2}
= A + C(1-\delta^2)t_{k+1}^{\alpha-2}
\\
&= (A - C\delta^2t_{k+1}^{\alpha-2}) + Ct_{k+1}^{\alpha-2}
< Ct_{k+1}^{\alpha-2},
\end{align*}
where on the final step we used $A<C\delta^2<C\delta^2t_{k+1}^{\alpha-2}$, by the choice~\eqref{eq:K} of~$C$
and also since $t_{k+1} \in (0,1)$ and $\alpha < 2$.
Thus, \eqref{eq:mupp_induction} holds for all $k \ge 0$, by induction.
This proves~\eqref{eq:muppineq1}.
To prove~\eqref{eq:muineq} we integrate twice over the interval $[0,t]$.
\end{proof}

With a similar (but more precise) proof, we can show that
the bounds~\eqref{eq:muppineq1} and \eqref{eq:muineq} of
Lemma~\ref{lemma:mu_approx} hold for $\alpha = 2\lg\rho$.
We omit the details since this result is not necessary in what follows.
Numerical experiments indicate that the constant $2\lg\rho$
is best possible~-- see Table~$\ref{tab:mu}$, where the last column
gives $(\mu(e^{-t}) - (\mu_1 - t\mu_1'))/t^{2\lg\rho}$. Observe the
small oscillations in the last column (these are discussed
at the end of this section).

\begin{table}[htdp] 	
\begin{center}
\caption{Approximation of $\mu(e^{-t})$ for $t = 2^{-k}$, $20\le k\le24$,
where $e_1(t) := \mu(e^{-t}) - (\mu_1 - t\mu_1')$}
\begin{tabular}{ccccc}
$k$  & $t=2^{-k}$   & $\mu(e^{-t})$ & $e_1(t)$ & $e_1(t)/t^{2\lg\rho}$ \\
\hline
20 & $9.5367\cdot10^{-7}$ & 2.6180306 & $1.1708\cdot10^{-8}$  & 2.6790\vphantom{$\big|^0$} \\
21 & $4.7684\cdot10^{-7}$ & 2.6180323 & $4.4999\cdot10^{-9}$  & 2.6958 \\
22 & $2.3842\cdot10^{-7}$ & 2.6180331 & $1.7079\cdot10^{-9}$  & 2.6787 \\
23 & $1.1921\cdot10^{-7}$ & 2.6180336 & $6.5648\cdot10^{-10}$ & 2.6956 \\
24 & $5.9605\cdot10^{-8}$ & 2.6180338 & $2.4917\cdot10^{-10}$ & 2.6786 \\
\hline
\end{tabular}
\label{tab:mu}
\end{center}
\end{table}

The following lemma is not necessary in what follows, but we state it here for
its independent interest and provide a sketch of the proof;
check also with Figure \ref{mugraph} for a graph of $\mu(z)$.

\begin{lemma}
\label{lemma:mu_monotonic}
The function $\mu(z)$ is strictly monotone increasing for $z\in[0,1)$.
\end{lemma}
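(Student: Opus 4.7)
The plan is to differentiate~\eqref{eq:murec}, iterate the resulting recurrence for $\mu'$, and reduce positivity of $\mu'(z)$ on $[0,1)$ to an elementary polynomial inequality in one variable.

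Starting from~\eqref{eq:muprime1}, write $\mu'(z) = h(z) + g(z)\,\mu'(z^4)$ with
\[
h(z) := 1 + 2z - \frac{4z^3}{\mu(z^4)}, \qquad g(z) := \frac{4z^7}{\mu(z^4)^2}\,\raisedot
\]
For any fixed $z \in [0,1)$ the iterate $z^{4^N} \to 0$, and $g(w) = O(w^7)$ as $w \to 0$, so the tail term $\mu'(z^{4^N}) \prod_{j=0}^{N-1} g(z^{4^j})$ vanishes in the limit. Unrolling yields the absolutely convergent series
\[
\mu'(z) = \sum_{k=0}^{\infty} h(z^{4^k}) \prod_{j=0}^{k-1} g(z^{4^j}),
\]
with the empty product understood as $1$. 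Since every weight $\prod_{j=0}^{k-1} g(z^{4^j})$ is strictly positive on $(0,1)$, positivity of $\mu'(z)$ reduces to the pointwise claim that $h(w) > 0$ for all $w \in [0,1)$.

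From Lemma~\ref{lemma:special_values} we have $\mu \geq 1$ on $[0,1)$; feeding this back into~\eqref{eq:murec} once more sharpens the lower bound to $\mu(w^4) \geq 1 + w^4 + w^8 - w^{16} \geq 1 + w^4$ for $w \in [0,1]$. Substituted into the definition of $h$, the desired inequality becomes
\[
q(w) := (1 + 2w)(1 + w^4) - 4w^3 = 1 + 2w - 4w^3 + w^4 + 2w^5 > 0 \qquad (w \in [0,1]),
\]
which is an elementary one-variable check: $q(0) = 1$, $q(1) = 2$, and inspection of $q'(w) = 2 - 12w^2 + 4w^3 + 10w^4$ shows that the interior minimum of $q$ on $[0,1]$ stays comfortably above zero.

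The main obstacle to sidestep is that the crude bound $\mu \geq 1$ alone only gives $h(w) > 0$ for $w$ bounded away from $1$; without the $w^4$ correction supplied by the extra iteration of~\eqref{eq:murec}, the argument breaks down precisely where it is needed, namely near $z = 1$. Modulo this one sharpening, the rest is bookkeeping. At $z = 0$ the series reduces to $\mu'(0) = h(0) = 1$, and at every other $z \in (0,1)$ each summand is strictly positive, whence $\mu'(z) > 0$ on all of $[0,1)$ as required.
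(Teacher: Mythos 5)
Your proof is correct and takes a genuinely different and cleaner route than the paper's. The paper's sketch establishes $\mu'(z)>0$ on a base interval $[0,3/4]$ by a rigorous numerical computation, verifies $\mu(z_0^4)>4/3$ numerically, and then bootstraps the bound $\mu(z^4)>4/3$ through the nested intervals $[z_K,z_{K+1}]$ with $z_k=(3/4)^{1/4^k}\to1$; the induction then yields $\mu'(z)\ge 1+2z-3z^3=(1-z)(1+3z+3z^2)>0$. You instead unroll $\mu'(z)=h(z)+g(z)\mu'(z^4)$ into a positively weighted series, which reduces positivity of $\mu'$ to the single pointwise claim $h(w)>0$ on $[0,1)$; the decisive move is to feed $\mu\ge1$ back into~\eqref{eq:murec} once to get the \emph{analytic} lower bound $\mu(w^4)\ge1+w^4+w^8-w^{16}\ge1+w^4$, which replaces the paper's numerically-verified $\mu(z^4)>4/3$ and suffices even at the troublesome endpoint $w=1$ (where $\mu\ge1$ alone gives $h\ge-1$). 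Your remaining polynomial inequality $q(w)=1+2w-4w^3+w^4+2w^5>0$ is correct; one can dispense with the ``inspection of $q'$'' by noting that AM--GM gives $1+w^4\ge2w^2$ and $2w+2w^5\ge4w^3$, so $q(w)\ge2w^2$, hence $q>0$ on $[0,1]$ since $q(0)=1$. The net gain of your approach is a fully self-contained proof with no appeal to numerics.
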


\begin{proof}[Sketch of proof]
Suppose that $z\in[0,1)$ and $N\ge1$.
Since $F(z) = \sum_{n\ge0} c_n z^n$, where the $c_n \in \{0,1\}$, we can
bound the `tails'
\[
\sum_{n\ge N} c_n z^n \le\frac{z^N}{1-z}
\qquad\text{and}\qquad
\sum_{n\ge N}n c_n z^{n-1} \le\frac{Nz^{N-1}}{1-z} + \frac{z^N}{(1-z)^2} \,\raisedot
\]
Thus, given $z_0<1$ and $\varepsilon > 0$,
we can easily find $N = N(\varepsilon)$ such that, for all $z\in [0,z_0]$,
\[
0 \le F(z) - F_N(z) \le\varepsilon
\qquad\text{and}\qquad
0 \le F'(z) - F'_N(z) \le\varepsilon,
\]
where $F_N(z) := \sum_{n=0}^{N-1} c_nz^n$ is the truncated power series approximating $F(z)$.

{From}~\eqref{eq:mudef} we have $\mu(z) \ge1$ and
\[
\frac{\mu'(z)}{\mu(z)} = \frac{F'(z)}{F(z)} -
4z^3\frac{F'(z^4)}{F(z^4)}\,\raisedot
\]
Take $z_0 = 3/4$.  Using the above and a rigorous numerical computation,
we can show that $\mu'(z) > 0$ for $z\in [0,z_0]$,
and also that $\mu(z_0^4) > 4/3$.
Thus, for $z \in [z_0,z_0^{1/4}]$, we have $\mu(z^4) > 4/3$.
In particular, $\mu(z_0) > 4/3$.

Define $z_k := (3/4)^{1/4^k}$, so $z_{k+1} = z_k^{1/4}$.
We prove, by induction on $k\ge0$, that
$\mu'(z) > 0$ for $z \in [0,z_k]$.  The base case ($k=0$) has been
established.  Assume that the result holds for $k \le K$;
hence, for $z\in [z_K,z_{K+1}]$, we have $\mu(z^4) > 4/3$.
Now, from~\eqref{eq:muprime1},
\[\mu'(z) \ge1 + 2z - \frac{4z^3}{\mu(z^4)}
 \ge1 + 2z - 3z^3 = (1-z)(1+3z+3z^2) > 0.\]
In other words, the result holds for $k = K+1$,
thus it holds for all $k\ge0$, by induction.
Since $\lim_{k\to\infty}z_k = 1$, this completes the proof.
\end{proof}

We are now in a position to treat the asymptotics of $F(z)$ as $z\to 1^-$. To
this end, we define the Mellin transforms
\begin{equation}
\label{eq:calF}
\cF(s) := \int_0^\infty \ln F(e^{-t})\,t^{s-1}\,\d t
\end{equation}
and
\begin{equation}
\label{eq:calM}
\cM(s) := \int_0^\infty \ln\mu(e^{-t})\,t^{s-1}\,\d t
\end{equation}
where the integrals converge for $\Re(s) > 0$, and by analytic
continuation elsewhere.
{From}~\eqref{eq:mudef} and well-known properties of Mellin
transforms (see, for example, \cite[Appendix~B.7]{FS}), we have
\begin{equation}
\label{eq:FMreln}
(1-4^{-s})\cF(s) = \cM(s).
\end{equation}

We deduce the asymptotic behaviour of $F(e^{-t})$ for small
positive~$t$ from knowledge of the singularities of $\cF(s)$.
Before doing this, we use analytic continuation to extend
the definitions \eqref{eq:calF} and \eqref{eq:calM} into the left half-plane.

Define
\[
\mutilde(t) := \ln\mu(e^{-t}) - \ln(\mu_1)e^{-\lambda t},
\]
where
\[
\lambda := \frac{\mu_1'}{\mu_1\ln\mu_1} \approx 1.403
\]
is a positive constant; the reason for our choice of $\lambda$ will
soon be clear.

Clearly $\mutilde(t) = O(e^{-t})$ as $t\to +\infty$.  Also,
from Lemma~\ref{lemma:mu_approx}, as $t \to 0^+$ we have for any constant $\alpha < 2\lg\rho \approx 1.388$
\[
\mutilde(t) = (\lambda\ln\mu_1 - \mu_1'/\mu_1)t + O(t^\alpha)=O(t^\alpha),
\]
by our choice of $\lambda$.

{From}~\eqref{eq:calM} and the definition of $\mutilde(t)$, we have
\begin{equation}
\label{eq:calM3}
\cM(s) = \Mtilde(s) + \ln(\mu_1)\lambda^{-s}\Gamma(s),
\end{equation}
where
\begin{equation}
\label{eq:Mtilde3}
\Mtilde(s) := \int_0^\infty \mutilde(t)t^{s-1}\,\d t.
\end{equation}
However, the integral in~\eqref{eq:Mtilde3} converges
for $\Re(s) > -\alpha$.  Since $\alpha$ may be chosen
arbitrarily close to $2\lg\rho$, this implies that
\eqref{eq:calM3} and \eqref{eq:Mtilde3} give the analytic continuation of $
\cM(s)$ into a meromorphic function in the half-plane
${\mathcal H} := \{s\in\mathbb{C}: \Re(s) > -2\lg\rho\}$.

Since $\Mtilde(s)$ has no singularities in $\mathcal H$, it follows
from~\eqref{eq:calM3} that the singularities of $\cM(s)$
in $\mathcal H$ are precisely those of $\ln(\mu_1)\lambda^{-s}\Gamma(s)$.
Also, from~\eqref{eq:FMreln}, the singularities of $\cF(s)$
in $\mathcal H$ are precisely those of $\cM(s)/(1-4^{-s})$.
We conclude that the Mellin transform $\cF(s)$
has three types of singularities in $\mathcal H$, as follows:
\begin{enumerate}
\item[(a)]
a double pole at $s = 0$, since $\Gamma(s)$ has a pole there, and
the denominator $1-4^{-s}$ vanishes at $s=0$;
\item[(b)]
simple poles at $s = ik\pi/\ln 2$ for $k \in \mathbb{Z}\setminus\{0\}$,
since the denominator $1-4^{-s}$ vanishes at these points; \\
and
\item[(c)]
a simple pole at $s=-1$, since $\Gamma(s)$ has a pole there.
\end{enumerate}
We are now ready to prove the following result, which gives
the asymptotic behaviour of $F(z)$ as $z \to 1^-$.  It is convenient
to express the result in terms of $\ln F(e^{-t})$.
Indeed, Theorem~\ref{Fasymptotics} is a weaker result, written in terms of~$F(z)$, of the following statement.

\begin{proposition}
\label{thm:lnFapprox2}
For small positive~$t$,
\begin{equation}
\label{eq:Fapprox2}
\ln F(e^{-t}) = -\lg\rho\cdot\ln t + c_0 + \sum_{k=1}^\infty a_k(t)	+ c_1 t + O(t^{\alpha}),
\end{equation}
where $c_0$ is given by~\eqref{eq:c0a},
$c_1$ is given by~\eqref{eq:c1},
$\alpha < 2\lg\rho \approx 1.388$,
and
\[
a_k(t) = \frac{1}{\ln 2}\,\Re\biggl(\cM\biggl(\frac{ik\pi}{\ln 2}\biggr)\exp(-ik\pi\lg t)\biggr).
\]
\end{proposition}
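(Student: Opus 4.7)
The strategy is a Mellin inversion argument, now available to us because of the analytic continuation of $\cM(s)$ into $\mathcal H$ via~\eqref{eq:calM3} and the explicit classification of poles of $\cF(s)=\cM(s)/(1-4^{-s})$ in $\mathcal H$ given just before the statement. For a suitable vertical line $\Re(s)=c>0$ the Mellin inversion formula
$$
\ln F(e^{-t}) = \frac{1}{2\pi i}\int_{c-i\infty}^{c+i\infty}\cF(s)\,t^{-s}\,\d s
$$
is valid; this requires a regularity check for $\ln F(e^{-t})$ which follows from the monotonicity and growth bounds of Lemma~\ref{lemma:Fproperties}, combined with $\ln F(e^{-t}) = \sum_{k\ge 0}\ln\mu(e^{-4^k t})$ from~\eqref{eq:mudef} and Lemma~\ref{lemma:special_values}.

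I would then shift the contour from $\Re(s)=c$ to $\Re(s)=-\alpha$ for any fixed $\alpha<2\lg\rho$, collecting residues at each of the three types of poles already identified. For the double pole at $s=0$, expansions
$1-4^{-s}=s\ln 4\bigl(1-\tfrac12 s\ln 4+\cdots\bigr)$, $\lambda^{-s}=1-s\ln\lambda+\cdots$, and $\Gamma(s)=s^{-1}-\gamma+\cdots$ show that $\cF(s)$ has leading singular part $\ln(\mu_1)/(s^2\ln 4)=(\lg\rho)/s^2$, so the residue of $\cF(s)t^{-s}$ at $s=0$ contributes exactly $-\lg\rho\cdot\ln t+c_0$, with $c_0$ a constant involving $\gamma$, $\ln\lambda$, $\ln\mu_1$ and $\Mtilde(0)$; this is the formula~\eqref{eq:c0a}. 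For each simple pole at $s=ik\pi/\ln 2$, $k\in\mathbb{Z}\setminus\{0\}$ (arising only from $1-4^{-s}$), the residue of $\cF(s)t^{-s}$ is $\cM(ik\pi/\ln 2)\,t^{-ik\pi/\ln 2}/\ln 4$; pairing $k$ with $-k$ and using $\overline{\cM(s)}=\cM(\bar s)$ (which holds because $\ln\mu(e^{-t})$ is real) converts these precisely into the terms
$a_k(t)=(\ln 2)^{-1}\Re\bigl(\cM(ik\pi/\ln 2)\exp(-ik\pi\lg t)\bigr)$. For the simple pole at $s=-1$ coming from $\Gamma(s)$, the residue produces a term of the form $c_1 t$, which defines~\eqref{eq:c1}.

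Finally, the remaining integral along $\Re(s)=-\alpha$ is bounded by $t^{\alpha}\int_{-\infty}^{\infty}|\cF(-\alpha+i\tau)|\,\d\tau=O(t^{\alpha})$, which together with the residues assembles~\eqref{eq:Fapprox2}.

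\textbf{Main obstacle.} The heart of the argument is controlling $|\cF(\sigma+i\tau)|$ as $|\tau|\to\infty$ uniformly for $\sigma\in[-\alpha,c]$: one needs fast enough decay to (i) kill the horizontal pieces of the shifted contour, (ii) ensure absolute convergence of $\sum_{k\neq 0}a_k(t)$, and (iii) make the remainder integral at $\Re(s)=-\alpha$ finite. By~\eqref{eq:calM3} and $\cF=\cM/(1-4^{-s})$, this reduces to the exponential decay of $\Gamma(s)$ in vertical strips (which is Stirling) together with a corresponding decay estimate for $\Mtilde(s)$, obtained by integration by parts in~\eqref{eq:Mtilde3}; the latter calls for somewhat more regularity of $\mutilde(t)$ than is proved in Lemma~\ref{lemma:mu_approx}, and is the only place where a genuinely analytic (rather than formal) input is required.
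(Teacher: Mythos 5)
Your argument is correct and is essentially the paper's: the paper collects the same residue contributions from the double pole at $s=0$, the simple poles at $s=ik\pi/\ln 2$, and the simple pole at $s=-1$, citing the Mellin dictionary of Flajolet--Sedgewick \cite[pg.~765]{FS} rather than writing the inversion integral and contour shift explicitly. The regularity you flag as potentially missing is in fact available from what is already proved: Lemma~\ref{lemma:mu_approx} gives $\mu''(e^{-t})=O(t^{\alpha-2})$, from which $\mutilde''(t)=O(t^{\alpha-2})$ as $t\to0^+$ follows by the chain rule (the subtracted term $\ln(\mu_1)e^{-\lambda t}$ is smooth), and two integrations by parts in~\eqref{eq:Mtilde3} then yield $\Mtilde(iy)\ll|y|^{-2}$, which together with the Stirling estimate $\Gamma(iy)\ll e^{-\pi|y|/2}$ supplies the decay you need for the horizontal segments, the remainder integral, and the absolute convergence of $\sum_k a_k(t)$.
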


\begin{proof}
We consider the three types of singularities of $\cF(s)$ in $\mathcal H$.
For case~(a), the double pole at $s=0$, we need the first two terms in the
Laurent expansion of $\cF(s)$. It is convenient to define\footnote{The
reader may think of $L(s)$ as the Dirichlet series $\sum_{n=1}^\infty b_n n^{-s}$,
where the $b_n$ are defined to be the coefficients in the power series
$\ln \mu(z) = \sum_{n=1}^\infty b_n z^n$. Be warned that $\mu(z)$ has a zero
at $z_0 \approx -0.2787 + 0.7477i$, so the power series has
radius of convergence $R = |z_0| \approx 0.7979 < 1$.  Thus, the $b_n$ have
faster than polynomial growth, and the Dirichlet series does not converge anywhere.
}
\[
L(s) := \frac{\cM(s)}{\Gamma(s)}\,\raisecomma
\]
so, from~\eqref{eq:calM3},
\begin{equation}
\label{eq:Ls2}
L(s) = \frac{\Mtilde(s)}{\Gamma(s)} + \ln(\mu_1)\lambda^{-s}.
\end{equation}
Taking the limit as $s\to 0$ in~\eqref{eq:Ls2} gives
\[
L(0) = \ln\mu_1 = 2\ln\rho \approx 0.9624.
\]
Differentiating both sides of~\eqref{eq:Ls2} and then taking the limit
as $s \to 0$ gives
\[
L'(0) = \Mtilde(0) - 2\ln\lambda\cdot\ln\rho \approx 0.05706.
\]
Near $s=0$ we have
\begin{gather*}
L(s) = L(0)\biggl(1 + \frac{L'(0)}{L(0)}s + O(s^2)\biggr),
\qquad
\Gamma(s) = \frac{1}{s}(1 - \gamma s + O(s^2)),
\\ \intertext{and}
(1-4^{-s})^{-1} = \frac{1}{2s\ln 2}(1+s\ln 2 + O(s^2)),
\end{gather*}
so
\[
\cF(s) = \frac{L(0)}{2\ln 2}\cdot\frac{1}{s^{2}} + \frac{c_0}{s} + O(1),
\]
where $L(0)/\ln 4 = \lg\rho$ and
\begin{equation}
\label{eq:c0a}
c_0 = \frac{(\ln 2 - \gamma)L(0) + L'(0)}{2\ln 2}
\approx 0.1216438693\,.
\end{equation}
Now, the `Mellin dictionary' of~\cite[pg.~765]{FS} shows that the double pole
at $s=0$ contributes the two leading terms $-\lg\rho\cdot\ln t + c_0$
of~\eqref{eq:Fapprox2}.

For case~(b), the poles at $s = ik\pi/\ln 2$ for $k \in \mathbb{Z}\setminus\{0\}$
are simple and have residue $\cM(ik\pi/\ln 2)/\ln 4$. Thus from the simple pole
at $ik\pi/\ln 2$ we get a term
\[
T_k(t) := \frac{1}{\ln 4}
\cM\left(\frac{ik\pi}{\ln 2}\right)\exp(-ik\pi\lg t).
\]
Combining the terms $T_k(t)$ and $T_{-k}(t)$ for $k\ge1$,
the imaginary parts cancel and we are left with the oscillatory term $a_k(t)$
in~\eqref{eq:Fapprox2}. Of course, in order to write the infinite sum over the $a_k(t)$ as stated in the proposition, we must show that this sum converges. Note that $\mutilde''(t) = O(t^{\alpha-2})$ as $t \to 0^+$, and
$\mutilde''(t)$ decreases exponentially as $t \to +\infty$.
Suppose $y \in \mathbb{R}\backslash\{0\}$. Then, using integration by parts once, we have
$$
\Mtilde(iy)
 = \int_0^\infty \mutilde(t)t^{iy-1}\,{\rm d}t= \left[\mutilde(t)\,\frac{t^{iy}}{iy}
    - \int\mutilde'(t)\,\frac{t^{iy}}{iy}\,{\rm d}t\right]_0^\infty= -\,\frac{1}{iy}\int_0^\infty\mutilde'(t)\,t^{iy}\,{\rm d}t,
$$
and twice, we have
$$
\Mtilde(iy)= \frac{1}{iy(iy+1)}\int_0^\infty \mutilde''(t)\,t^{iy+1}\,{\rm d}t [2pt]
= \frac{I_0(y)+I_1(y)}{iy(iy+1)}\,\raisecomma
$$
where
\[I_0(y) = \int_0^1 \mutilde''(t)\,t^{iy+1}\,{\rm d}t \;\text{ and }\;
  I_1(y) = \int_1^\infty \mutilde''(t)\,t^{iy+1}\,{\rm d}t.\]
Now, using the asymptotic bounds on $\mutilde''(t)$ for small and large $t$, respectively, we have
\[|I_0(y)| \ll \int_0^1 t^{\alpha-1}\,{\rm d}t \ll 1\;
\text{ and }\;
|I_1(y)| \ll \int_1^\infty te^{-t}\,{\rm d}t \ll 1,\]
so as $y\to\infty$,
\[\Mtilde(iy) \ll |y|^{-2}.\]
Also, it follows from the complex version of Stirling's formula that
$\Gamma(iy) \ll e^{-\pi y/2}$,
so $\mathcal{M}(iy) \ll |y|^{-2}$ as $y \to \infty$.
Thus,
\[\sum_{k=1}^\infty |\mathcal{M}(ik\pi/\ln(2))| \ll
  \sum_{k=1}^\infty k^{-2} < \infty,\]
and the series $\sum_{k=1}^\infty a_k(t)$ is uniformly and absolutely
convergent.

For case~(c), the factor $(1-4^{-s})^{-1}$ is $-1/3$ at $s=-1$,
so $\cF(s)$ has a pole with residue
\begin{equation}
\label{eq:c1}
c_1 = \frac{\lambda\ln\mu_1}{3}
      = \frac{\mu_1'}{3\mu_1}
      = \frac{23+3\sqrt{5}}{66} \approx 0.4501
\end{equation}
at $s=-1$. This accounts for the term $c_1 t$ in~\eqref{eq:Fapprox2}.

Finally, the error term $O(t^\alpha)$ in~\eqref{eq:Fapprox2}
follows from the fact that we have only considered the singularities
of $\cF(s)$ in $\mathcal H$.
\end{proof}

We may write $a_k(t)$ as
\[
a_k(t) = A_k\cos(k\pi\lg t) + B_k\sin(k\pi\lg t).
\]
Define $C_k := \sqrt{A_k^2 + B_k^2} = \max_{t>0} |a_k(t)|$.
The constants $A_k$, $B_k$ and $C_k$ for $k \le4$ are given in Table~$\ref{tab:ABC}$.
We discuss the methods used to compute the numerical values of these constants at
the end of this section.

\begin{table}[htdp]
\begin{center}
\caption{The constants $A_k$, $B_k$ and $C_k$ related to $a_k(t)$ for $k\le4$.}
\begin{tabular}{cccc}
$k$ & $A_k$ & $B_k$ & $C_k$ \\
\hline
1 & $+2.009968436\cdot10^{-3}$  & $-6.155485619\cdot10^{-4}$  & $2.102111592\cdot10^{-3}$\vphantom{$\big|^0$} \\
2 & $-1.751530562\cdot10^{-6}$  & $+1.354122041\cdot10^{-6}$  & $2.213934464\cdot10^{-6}$  \\
3 & $+4.561611933\cdot10^{-10}$ & $-2.802129666\cdot10^{-9}$  & $2.839016326\cdot10^{-9}$  \\
4 & $+2.421586941\cdot10^{-13}$ & $+3.247722091\cdot10^{-12}$ & $3.256737573\cdot10^{-12}$ \\
\hline
\end{tabular}
\label{tab:ABC}
\end{center}
\end{table}

\begin{proof}[Proof of Theorem~\textup{\ref{Fasymptotics}}]
If we define $C(z) := (1-z)^{\lg \rho}F(z)$, then clearly
$C(z)$ is positive for $z \in [0,1)$, and $C(0) = 1$.
Also, for small positive $t$,
Proposition~\ref{thm:lnFapprox2} gives
\[
C(e^{-t}) = D(t)e^{O(t)},
\]
where
\[
D(t) = \exp\biggl(c_0 + \sum_{k=1}^\infty a_k(t)\biggr)
\]
is a continuous function, which is periodic in the variable $\lg t$.
Since $F(e^{-t}) > 1$ for $t \in (0,\infty)$, we must have
$$
0 < \inf_{t>0} D(t) \le\sup_{t>0}D(t) < \infty.
$$
Thus
$$
0 < \inf_{0<t<1} C(e^{-t}) < \sup_{0<t<1}C(e^{-t}) < \infty.
$$
However, it is easy to see directly that $C(e^{-t})$ is bounded
away from zero and infinity for $t \in [1,\infty)$.
\end{proof}

In the remainder of this section, we briefly discuss some of the numerical
findings and computations that were used throughout this section.

Regarding the function $C(z)$ of Theorem~\ref{Fasymptotics}, we
find numerically that
$1 < C(z) < 1.14$ for all $z \in (0,1)$, and
$1.11 < C(z) < 1.14$ for all $z \in [1/2,1)$.

In order to evaluate
$\cM(\pi ik/\ln2)$ for $k \in \mathbb{Z}\setminus\{0\}$, by \eqref{eq:calM3}
it suffices to evaluate $\Mtilde(\pi ik/\ln2)$, since the term involving the $\Gamma$-function can be evaluated by standard methods.
For purposes of numerical computation,
we transform the integral~\eqref{eq:Mtilde3} as follows.

Changing variables $t = e^u$, we have
\[
\Mtilde\biggl(\frac{\pi ik}{\ln 2}\biggr)
= \int_{-\infty}^{+\infty} \mutilde(e^u)e^{\pi iku/\ln 2}\,\d u.
\]
Now let $v := ku/(2\ln 2)$, so
\[
\Mtilde\biggl(\frac{\pi ik}{\ln 2}\biggr)
= \frac{2\ln 2}{k} \int_{-\infty}^{+\infty} \mutilde(e^{2\ln(2)v/k}) e^{2\pi iv}\,\d v.
\]
Using the $1$-periodicity of $e^{2\pi iv}$, we obtain
\begin{equation}
\label{eq:Mtilde-periodic}
\Mtilde\biggl(\frac{\pi ik}{\ln 2}\biggr)
= \frac{2\ln 2}{k} \int_0^1 f_k(v)e^{2\pi iv}\,\d v,
\end{equation}
where $f_k(v)$ is a $1$-periodic function defined by a rapidly convergent series;
\[
f_k(v) := \sum_{j\in\mathbb{Z}} \mutilde(e^{2\ln(2)(v+j)/k}).
\]

The integral in~\eqref{eq:Mtilde-periodic} can be evaluated by any method
which is suitable for periodic integrands (a simple and good choice is the
trapezoidal rule~\cite{TW}).

\begin{table}[htdp]	
\begin{center}
\caption{Approximation of $\ln(F(e^{-t}))$ using Proposition~\ref{thm:lnFapprox2}.
Here $e_2(t)$ is defined as the approximation given by~\eqref{eq:Fapprox2} minus the exact value $\ln F(e^{-t})$.}
\begin{tabular}{cccc}
$t$ & $\ln(F(e^{-t}))$ & $e_2(t)$ & $e_2(t)/t^{2\lg\rho}$ \\
\hline
$1.0\cdot10^{-1}$  & 1.756508934 & $7.11\cdot10^{-3}$  & 0.1739\vphantom{$\big|^0$} \\
$1.0\cdot10^{-2}$  & 3.322632048 & $2.93\cdot10^{-4}$  & 0.1755 \\
$1.0\cdot10^{-3}$  & 4.919666200 & $1.19\cdot10^{-5}$  & 0.1748 \\
$1.0\cdot10^{-4}$  & 6.514164850 & $4.91\cdot10^{-7}$  & 0.1757 \\
$1.0\cdot10^{-5}$  & 8.114306645 & $2.00\cdot10^{-8}$  & 0.1755 \\
$1.0\cdot10^{-6}$  & 9.714782160 & $8.16\cdot10^{-10}$ & 0.1748 \\
$1.0\cdot10^{-7}$  & 11.30965459 & $3.35\cdot10^{-11}$ & 0.1757 \\
$1.0\cdot10^{-8}$  & 12.91018122 & $1.37\cdot10^{-12}$ & 0.1755 \\
$1.0\cdot10^{-9}$  & 14.51031430 & $5.57\cdot10^{-14}$ & 0.1748 \\
$1.0\cdot10^{-10}$ & 16.10521012 & $2.29\cdot10^{-15}$ & 0.1757 \\
\hline
\end{tabular}
\label{tab:lnF}
\end{center}
\end{table}

Table~\ref{tab:lnF} shows the results of a numerical computation using
Proposition~\ref{thm:lnFapprox2}. We used $8$ terms in the sum
over $a_k(t)$.  In the table, $e_2(t)$
is defined as the approximation given by~\eqref{eq:Fapprox2} minus the exact
value $\ln F(e^{-t})$. It appears from the last column of the table
that the error is of order $t^{2\lg\rho}$. Also, the last column does not
appear to tend to a limit as $t \to 0^+$; instead it
fluctuates in a small interval. The same phenomenon may be observed in the
last column of Table~\ref{tab:mu}.  This suggests that $\cM(s)$
and $\cF(s)$ have poles at $s = -2\lg\rho + ik\pi/\ln 2$
for $k \in \mathbb{Z}$, as expected from the form of~\eqref{eq:mupp}.

\section({Algebraic independence of \$F(z)\$, \$F(z\000\1364)\$, \$F'(z)\$, and \$F'(z\000\1364)\$})%
{Algebraic independence of $F(z)$, $F(z^4)$, $F'(z)$, and $F'(z^4)$}
\label{s3}

In this section, we prove Theorem~\ref{funcs} up to a certain algebraic statement
concerning the nonexistence of polynomials satisfying a certain functional
equation. Because of possible independent interest, we provide a much more generalised version
of the statement than immediately needed for our current purpose. It is as follows.

\begin{theorem}
\label{th-tensor}
There are no polynomials $p_{m_0,\dots,m_s}(z)\in\mathbb{C}[z]$ \textup(besides all being trivial\textup) such that
\begin{multline}
\label{zer-tensor}
\lambda(z)\sum_{\substack{0\le m_j\le M_j\\j=0,1,\dots,s}}
p_{m_0,\dots,m_s}(z)y_0^{m_0}\dotsb y_s^{m_s}
\\[-15pt]
=\sum_{\substack{0\le m_j\le M_j\\j=0,1,\dots,s}}p_{m_0,\dots,m_s}(z^4)\prod_{i=0}^s(1+z+z^2-zy_i)^{M_i-m_i}
\end{multline}
for some rational function $\lambda(z)$.
\end{theorem}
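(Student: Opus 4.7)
The plan is to exploit the simple dynamics of the affine map $y\mapsto 1+z+z^2-zy$, which has the rational fixed point $\xi(z):=(1+z+z^2)/(1+z)$ and linearization $-z$. In the shifted coordinates $\tilde y_i:=y_i-\xi(z)$ the map becomes $\tilde y_i\mapsto-z\tilde y_i$, so the product on the right-hand side of~\eqref{zer-tensor} collapses to $(-z)^{|M|-|m|}\tilde y^{M-m}$. Expanding $P(z;y)=\sum_m p_m(z)y^m$ in the shifted basis via $y^m=\sum_k\binom{m}{k}\xi(z)^{|m|-|k|}\tilde y^k$ and matching coefficients of $\tilde y^k$ on both sides yields the decoupled system
\[
\lambda(z)\,q_k(z)=(-z)^{|k|}p_{M-k}(z^4),\qquad 0\le k\le M,
\]
where $q_k(z):=\sum_{m\ge k}\binom{m}{k}p_m(z)\xi(z)^{|m|-|k|}$. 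After clearing the $(1+z)$-denominators this is an honest system of polynomial identities in~$z$.

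Assume for contradiction not all $p_m$ vanish; we may assume $p_M\ne0$ (otherwise iterate the argument on a smaller effective multi-degree). A degree analysis of the system, using the observation that the dominant contribution to the polynomial form of $q_k(z)$ comes from the $m=M$ term $\binom{M}{k}(1+z+z^2)^{|M|-|k|}p_M(z)$, together with the structural constraint that the right-hand side $(-z)^{|k|}p_{M-k}(z^4)$ has nonzero $z$-coefficients only in one residue class modulo~$4$, forces the linear relation $\deg p_j-|j|/2=\textrm{const}$ across all nonzero~$p_j$. In particular, the indices $|j|$ for which $p_j\ne0$ all share a common parity $\epsilon\in\{0,1\}$, which must equal the parity of $|M|$ itself.

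The contradiction then emerges by plugging in a carefully chosen $k$ for which the right-hand side vanishes by the parity constraint. If $\epsilon=0$ (so $|M|$ is even), take $k=M-e_i$ for some $i$ with $M_i\ge1$; the sum defining $q_{M-e_i}$ contains only $m=M$ and $m=M-e_i$, giving
\[
q_{M-e_i}=p_{M-e_i}+M_ip_M\xi(z),
\]
and both $p_{M-e_i}$ (total-degree index $|M|-1$, odd) and the right-hand-side coefficient $p_{e_i}(z^4)$ (total-degree index $1$, odd) vanish by parity, leaving $\lambda\cdot M_ip_M\xi(z)=0$ and hence $p_M=0$. If $\epsilon=1$ (so $|M|$ is odd), take $k=M$; then $q_M=p_M$ and the right-hand side $(-z)^{|M|}p_0(z^4)=0$ (since $|0|=0$ has the wrong parity), giving $\lambda p_M=0$ and again $p_M=0$. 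Either way we contradict $p_M\ne0$.

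The chief subtlety in carrying this out will be to verify that the degree analysis is robust against possible cancellations in the linear combination forming $q_k(z)$ when several $p_m$ with the same value of $\deg p_m+|m|$ conspire to cancel their leading terms; one resolves this by taking $m^\#$ to be a componentwise-maximal maximizer of $\deg p_m+|m|$ and performing the analysis relative to~$m^\#$, with the argument reducing to the stated case $m^\#=M$ under the running assumption $p_M\ne0$. The low-degree cases $|M|\in\{0,1\}$ either fall outside the setting of~\eqref{zer-tensor} (the $|M|=0$ case corresponds to the trivial relation of Theorem~\ref{funcs}) or are handled directly by the same parity dichotomy above.
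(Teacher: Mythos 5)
Your central claim—that in the shifted coordinates $\tilde y_i=y_i-\xi(z)$ the right-hand product collapses to $(-z)^{|M|-|m|}\tilde y^{M-m}$—is incorrect, and the rest of the argument rests on it. It is true that the affine map $y\mapsto 1+z+z^2-zy$ conjugates to $\tilde y\mapsto -z\tilde y$, but the factor appearing in the product is the \emph{value} $1+z+z^2-zy_i$, which in shifted coordinates is $\xi(z)-z\tilde y_i$, not $-z\tilde y_i$. (Check: $1+z+z^2-z\xi=\xi$, so $1+z+z^2-z(\tilde y_i+\xi)=\xi-z\tilde y_i$.) Hence
\[
\prod_{i=0}^s(1+z+z^2-zy_i)^{M_i-m_i}
=\prod_{i=0}^s\bigl(\xi(z)-z\tilde y_i\bigr)^{M_i-m_i}
\]
is a full multi-binomial expansion in $\tilde y$, not a single monomial. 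Matching the coefficient of $\tilde y^k$ on both sides therefore gives
\[
\lambda(z)\sum_{m\ge k}\binom{m}{k}p_m(z)\,\xi(z)^{|m|-|k|}
=(-z)^{|k|}\sum_{n\ge k}\binom{n}{k}p_{M-n}(z^4)\,\xi(z)^{|n|-|k|},
\]
a system that is just as coupled as the original identity; your claimed right-hand side $(-z)^{|k|}p_{M-k}(z^4)$ keeps only the $n=k$ term. One sees the error already in the case $s=0$, $M=1$: the $\tilde y^0$ equation is $\lambda(p_0+p_1\xi)=p_0(z^4)\xi+p_1(z^4)$, not $\lambda(p_0+p_1\xi)=p_1(z^4)$.

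The deeper issue is that the operator on the right of~\eqref{zer-tensor} is not a substitution but a substitution composed with the degree-reversal $m\mapsto M-m$. A pure substitution is diagonalized by the affine fixed point $\xi$, but the reversal is not; the true diagonalizing coordinates are the fixed points of the M\"obius map $y\mapsto 1/(1+z+z^2-zy)$, i.e.\ the roots of $zy^2-(1+z+z^2)y+1=0$, which are irrational over $\mathbb{C}(z)$. This is precisely why the paper proceeds differently: it specializes $z$ to roots of unity, where the corresponding composite operator can be written as $U_{M_0}(\gamma)\otimes\dots\otimes U_{M_s}(\gamma)$ for a genuine $\gamma\in\SL_2(\mathbb{C})$; then Lemma~\ref{det-lem} diagonalizes $U_M(\gamma)$ using the \emph{two} eigendirections of $\gamma$ (not one affine fixed point), constraining $\lambda(z)$ first to be a monomial $\lambda z^N$, then $\lambda=1$ via the eigenvalue structure at $z=1$, then $N=0$ via a Fibonacci-residue argument at $z=\zeta_p$ for infinitely many primes, and finally kills the relation by a degree comparison. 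Without a correct diagonalization, the parity and degree bookkeeping you propose does not get off the ground.
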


This nonexistence result is proved in the next section.

To start our proof of Theorem~\ref{funcs}, we show that Theorem \ref{Fasymptotics} gives a recipe for computing
the radial asymptotics of $F(\xi z)$ as $z\to1^-$ for any root of unity $\xi$ of degree $4^n$.
Consider, for example, $\xi_1\in\{\pm i,-1\}$ and substitute $z=\xi_1z$ into
equation \eqref{Fdefn}. Using the asymptotics provided in Theorem~\ref{Fasymptotics}, then as $z\to1^-$ we have
\begin{align*}
F(\xi_1z)
&=(1+\xi_1z+\xi_1^2z^2)F(z^4)-z^4F(z^{16})
\\
&=(1+\xi_1+\xi_1^2)\frac{C(z^4)}{(1-z^4)^{\lg\rho}}(1+O(1-z^4))
-\frac{C(z^{16})}{(1-z^{16})^{\lg\rho}}(1+O(1-z^{16}))
\\
&=(1+\xi_1+\xi_1^2)\frac{C(z)}{(4(1-z))^{\lg\rho}}(1+O(1-z))
-\frac{C(z)}{(16(1-z))^{\lg\rho}}(1+O(1-z))
\\
&=\biggl((1+\xi_1+\xi_1^2)\frac{3-\sqrt5}2-\frac{7-3\sqrt5}2\biggr)
\frac{C(z)}{(1-z)^{\lg\rho}}(1+O(1-z))
\\
&=\Omega(\xi_1)\frac{C(z)}{(1-z)^{\lg\rho}}(1+O(1-z)),
\end{align*}
because $4^{-\lg\rho}=(3-\sqrt5)/2$. Similarly, if $\xi_2^4=\xi_1$ then as $z\to
1^-$ we have
\begin{equation*}
F(\xi_2z)
=(1+\xi_2z+\xi_2^2z^2)F(\xi_1z^4)-\xi_1z^4F(z^{16})
=\Omega(\xi_2)\frac{C(z)}{(1-z)^{\lg\rho}}(1+O(1-z)),
\end{equation*}
where
$$
\Omega(\xi_2)=(1+\xi_2+\xi_2^2)\Omega(\xi_1)\frac{3-\sqrt5}2-\xi_1\frac{7-3\sqrt5}2,
$$
and in general this iteration defines the function $\Omega(\xi)$
at any root of unity $\xi$ of degree $4^n$ for $n\ge 0$:
$$
\Omega(\xi)=(1+\xi+\xi^2)\Omega(\xi^4)\frac{3-\sqrt5}2-\xi^4\Omega(\xi^{16})\frac{7-3\sqrt5}2,
$$
and $\Omega(1)=1$, together with the related radial asymptotics of $F(\xi z)$.
Note that $(3-\sqrt5)/2=\rho^{-2}$ and $(7-3\sqrt5)/2=\rho^{-4}$.

\begin{lemma}
\label{lem1}
Let $\xi$ be a root of unity of degree $4^n$ for some $n\ge 0$. Then as $z\to1^-$, we have
\begin{equation*}
F(\xi z)=\Omega(\xi)\frac{C(z)}{(1-z)^{\lg\rho}}(1+O(1-z)),
\end{equation*}
where the function $\Omega(z)$ satisfies $\Omega(1)=1$ and
\begin{equation}
\Omega(z)=(1+z+z^2)\rho^{-2}\Omega(z^4)-z^4\rho^{-4}\Omega(z^{16}).
\label{AF2}
\end{equation}
\end{lemma}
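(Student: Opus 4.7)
The plan is to prove Lemma~\ref{lem1} by induction on $n$, mirroring and generalising the ad~hoc computations the excerpt already carried out for $n=1$ and $n=2$. The base case $n=0$ is precisely Theorem~\ref{Fasymptotics}, together with the normalisation $\Omega(1)=1$ that is built into the definition.

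For the inductive step, suppose the asymptotic statement has been established for every root of unity whose degree divides $4^{n-1}$, and let $\xi$ be a root of unity of degree $4^n$. Substituting $z \mapsto \xi z$ in the Mahler equation \eqref{Fdefn} gives
\[
F(\xi z) = (1 + \xi z + \xi^2 z^2)\,F(\xi^4 z^4) - \xi^4 z^4\, F(\xi^{16} z^{16}).
\]
Since $\xi^4$ and $\xi^{16}$ have degrees dividing $4^{n-1}$, the inductive hypothesis applied with the variable $z^4$ (respectively $z^{16}$) yields
\[
F(\xi^4 z^4) = \Omega(\xi^4)\,\frac{C(z^4)}{(1-z^4)^{\lg\rho}}\,(1+O(1-z^4))
\]
and similarly for $F(\xi^{16} z^{16})$. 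Here I would emphasise that the inductive hypothesis applies verbatim because $z^4, z^{16}\in[0,1)$ tend to $1^-$ as $z\to 1^-$.

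The remaining work is essentially bookkeeping. I would use the functional identity $C(z)=C(z^4)$ from Theorem~\ref{Fasymptotics} to replace $C(z^4)$ and $C(z^{16})$ by $C(z)$; expand
\[
(1-z^{4^k})^{\lg\rho} = \bigl(4^k\bigr)^{\lg\rho}(1-z)^{\lg\rho}\bigl(1+O(1-z)\bigr)
\]
for $k=1,2$ via Taylor expansion of $1-z^{4^k} = 4^k(1-z)(1+O(1-z))$; and use $4^{-\lg\rho}=\rho^{-2}$, $16^{-\lg\rho}=\rho^{-4}$. Likewise $(1+\xi z+\xi^2 z^2)=(1+\xi+\xi^2)+O(1-z)$ and $\xi^4 z^4 = \xi^4 + O(1-z)$. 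Assembling these ingredients yields
\[
F(\xi z) = \Bigl[(1+\xi+\xi^2)\rho^{-2}\Omega(\xi^4) - \xi^4\rho^{-4}\Omega(\xi^{16})\Bigr]\frac{C(z)}{(1-z)^{\lg\rho}}\bigl(1+O(1-z)\bigr),
\]
and the bracketed coefficient is exactly $\Omega(\xi)$ as defined by \eqref{AF2}, completing the induction.

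There is no serious obstacle here: the argument is a direct induction using the Mahler equation and the established asymptotic for $F(z)$. The only place requiring mild care is handling the error terms uniformly at the correct order $O(1-z)$. Specifically, one must verify that the factors $(1-z^4)^{-\lg\rho}$ and $(1-z^{16})^{-\lg\rho}$ differ from $(4(1-z))^{-\lg\rho}$ and $(16(1-z))^{-\lg\rho}$ only by a multiplicative factor $1+O(1-z)$ (not merely $o(1)$), so that the combined error in $F(\xi z)$ remains $O(1-z)$ rather than degrading to a larger term. This is automatic from the analyticity of $1-z^{4^k}$ at $z=1$.
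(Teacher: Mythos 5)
Your proposal is correct and essentially the same as the paper's argument: the paper substitutes $z\mapsto\xi z$ into \eqref{Fdefn}, applies Theorem~\ref{Fasymptotics} together with $C(z)=C(z^4)$ and $4^{-\lg\rho}=\rho^{-2}$, works this out explicitly for $\xi$ of degree $4$ and $16$, and then remarks that ``in general this iteration defines $\Omega(\xi)$'' at any $4^n$-th root of unity. You have simply made the implicit induction on $n$ explicit, which is a clean formalization of what the paper sketches.
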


We stress that the function $\Omega(z)$ and so its relative
\begin{equation}
\omega(z):=\frac{\rho^2\Omega(z)}{\Omega(z^4)}
\label{ome}
\end{equation}
are only defined on the set of roots of unity of degree $4^n$ where $n=0,1,2,\dots$\,.

\begin{lemma}
\label{lem1a}
The function \eqref{ome} is transcendental over the field of rational functions.
\end{lemma}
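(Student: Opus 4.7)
The plan is to reduce the assertion to Theorem~\ref{th-tensor} specialized to $s=0$. Suppose for contradiction that $\omega$ is algebraic over $\mathbb{C}(z)$ in the sense that there exists a nonzero polynomial $P(z,w)=\sum_{m=0}^{d}p_m(z)w^m\in\mathbb{C}[z,w]$ with $P(\xi,\omega(\xi))=0$ for every root of unity $\xi$ of degree~$4^n$. After clearing content and passing to an irreducible factor of minimal $w$-degree still vanishing on an infinite subset of the graph of~$\omega$, I may assume $P$ is irreducible in $\mathbb{C}[z,w]$ with $d\ge 1$ and $p_0\not\equiv 0$ (if $p_0\equiv 0$, then irreducibility forces $P=cw$ and so $\omega\equiv 0$, contradicting $\omega(1)=\rho^2$).

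The first main step is to produce a second polynomial vanishing on the graph of~$\omega$. Dividing \eqref{AF2} by $\Omega(z^4)$ and using \eqref{ome} yields the functional equation
\[
\omega(z)=1+z+z^2-\frac{z^4}{\omega(z^4)},
\]
equivalent to $\omega(z^4)=z^4/(1+z+z^2-\omega(z))$. Substituting this into $P(\xi^4,\omega(\xi^4))=0$ and clearing $(1+z+z^2-\omega(z))^d$ produces
\[
\tilde P(z,w):=\sum_{m=0}^{d}p_m(z^4)\,z^{4m}\,(1+z+z^2-w)^{d-m},
\]
of $w$-degree exactly $d$ (since $p_0\not\equiv 0$), which vanishes on the same infinite graph. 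Because $P$ is irreducible and its zero curve has infinitely many common points with the zero set of $\tilde P$, the standard B\'ezout argument forces $P\mid\tilde P$ in $\mathbb{C}[z,w]$; matching $w$-degrees gives $\tilde P(z,w)=\lambda(z)\,P(z,w)$ for some $\lambda(z)\in\mathbb{C}[z]$.

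Finally, I would recast this identity in the exact shape of \eqref{zer-tensor}. Introducing $y_0:=w/z$ and the rescaled coefficients $\tilde p_m(z):=z^mp_m(z)$, one checks at once that $P(z,zy_0)=\sum_m\tilde p_m(z)y_0^m$ and $\tilde P(z,zy_0)=\sum_m\tilde p_m(z^4)(1+z+z^2-zy_0)^{d-m}$, so the identity $\tilde P=\lambda P$ becomes
\[
\sum_{m=0}^{d}\tilde p_m(z^4)\,(1+z+z^2-zy_0)^{d-m}=\lambda(z)\sum_{m=0}^{d}\tilde p_m(z)\,y_0^m,
\]
which is precisely \eqref{zer-tensor} for $s=0$ and $M_0=d$ (after replacing $\lambda$ by its reciprocal). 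Theorem~\ref{th-tensor} then forces $\tilde p_m\equiv 0$ for every~$m$, hence $p_m\equiv 0$ for every~$m$, contradicting the nontriviality of~$P$.

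The essential difficulty of the lemma is thereby shifted onto Theorem~\ref{th-tensor}, whose proof is carried out independently in Section~\ref{s4}; granting that, the only nonroutine ingredient used here is the standard fact that two polynomials sharing infinitely many zeros on an irreducible curve must have the curve as a common component, together with the recursion for $\omega$ extracted from \eqref{AF2} and \eqref{ome}.
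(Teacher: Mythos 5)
Your proof is correct and essentially identical in strategy to the paper's: both transform the putative algebraic relation via the recursion $\omega(z^4)=z^4/(1+z+z^2-\omega(z))$, establish that the transformed relation must be proportional to the original, and then invoke Theorem~\ref{th-tensor} with $s=0$ (after the substitution $y_0=w/z$ and the rescaling $p_m\mapsto z^mp_m$) to rule out proportionality. The only cosmetic divergence is in how proportionality is forced: you take an irreducible factor and argue $P\mid\tilde P$ via a B\'ezout count of common zeros, whereas the paper takes a relation of minimal $M$ in the variable $\omega(z)/z$ and eliminates the top-degree term to contradict minimality; both are standard and interchangeable here.
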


\begin{proof}
We start observing that the functional equation \eqref{AF2} translates into
\begin{equation}
\omega(z)=1+z+z^2-\frac{z^4}{\omega(z^4)}
\label{ACF}
\end{equation}
for the function~\eqref{ome}. Assume, on the contrary, that the function $\omega(z)$ is algebraic,
hence satisfies, on the set of the roots of unity, a non-trivial relation
$$
\sum_{m=0}^Mp_m(z)(\omega(z)/z)^m=\sum_{m=0}^Mp_m(z)y^m\big|_{y=\omega(z)/z}=0,
$$
which we suppose to have the least possible $M$. Multiply the relation by $(z/\omega(z))^M$,
substitute $z^4$ for $z$ in the relation, and apply \eqref{ACF} to arrive at
$$
\sum_{m=0}^Mp_m(z^4)(1+z+z^2-zy)^{M-m}\big|_{y=\omega(z)/z}
=\sum_{m=0}^Mp_m(z^4)(1+z+z^2-\omega(z))^{M-m}=0.
$$
If the two algebraic relations are not proportional, that is, if
$\sum_{m=0}^Mp_m(z^4)(1+z+z^2-zy)^{M-m}$ is not $\lambda(z)\sum_{m=0}^Mp_m(z)y^m$ for some $\lambda(z)\in\mathbb C(z)$,
then a suitable linear combination of the two will eliminate the term $y^M$ and result
in a non-trivial algebraic relation for $\omega(z)$ of degree smaller than~$M$, a contradiction.
The proportionality, on the other hand, is not possible in view of Theorem~\ref{th-tensor} applied in the case $s=0$.
Thus, $\omega(z)$ cannot satisfy an algebraic relation over $\mathbb C(z)$.
\end{proof}

Recall the function $\mu(z)$ defined in~\eqref{eq:mudef}.
It follows from \eqref{eq:murec} and \eqref{ACF} and from $\mu(1)=\rho^2=\omega(1)$ that the functions
$\mu(z)$ and $\omega(z)$ coincide on the set of roots of unity of degree $4^n$. Thus, Lemma~\ref{lem1a}
implies that $\mu(z)$ is a transcendental function\,---\,the fact which is already a consequence of the algebraic independence of
$F(z)$ and $F(z^4)$. Note, however, that in the opposite direction the transcendence of $\mu(z)$ does not directly imply Lemma~\ref{lem1a},
because the function $\omega(z)$ is defined on a smaller set of certain roots of unity and is not even known to be analytic.

Another immediate consequence of the transcendence of $\omega(z)$ is the following result.

\begin{lemma}
\label{lem2}
Assume that with polynomials $p_0(z),\dots,p_M(z)\in\mathbb C[z]$ we have
$$
\sum_{m=0}^Mp_m(\xi)\Omega(\xi)^m\Omega(\xi^4)^{M-m}=0
$$
for any root of unity of degree $4^n$, where $n=0,1,2,\dots$\,.
Then $p_m(z)=0$ for each $m=0,1,\dots,M$.
\end{lemma}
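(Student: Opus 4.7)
The plan is to divide the hypothesized relation by $\Omega(\xi^4)^M$ in order to recast it as a polynomial identity in $\omega(\xi)$, then invoke the transcendence of $\omega$ from Lemma~\ref{lem1a}. Explicitly, at any root of unity $\xi$ of degree $4^n$ with $\Omega(\xi^4)\ne 0$, substituting $\Omega(\xi)/\Omega(\xi^4)=\omega(\xi)/\rho^2$ from~\eqref{ome} produces
\[
\sum_{m=0}^{M}\rho^{-2m}p_m(\xi)\,\omega(\xi)^m=0.
\]
The polynomial $\tilde P(z,y):=\sum_{m=0}^{M}\rho^{-2m}p_m(z)\,y^m\in\mathbb{C}[z][y]$ thus satisfies $\tilde P(\xi,\omega(\xi))=0$ on the corresponding subset of roots of unity. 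If some $p_m$ were non-zero, $\tilde P$ would be a non-trivial algebraic relation satisfied by $\omega$ on this set; since the proof of Lemma~\ref{lem1a} rules out precisely such relations (via Theorem~\ref{th-tensor} at $s=0$), each $p_m$ must be identically zero, which is what is to be shown.

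The step I expect to be the main obstacle is the non-vanishing $\Omega(\xi^4)\ne 0$, which is needed for the division to be legitimate. My primary plan is to prove this by induction on the degree of $\xi$ directly from the recurrence~\eqref{AF2} and the base case $\Omega(1)=1$; one can check by hand that, for instance, $\Omega(-1)=\rho^{-3}$ and $\Omega(\pm i)=\pm i\rho^{-2}-\rho^{-4}$ are non-zero, and the general inductive step should follow from the bounded-below behaviour of the linear combination appearing in~\eqref{AF2}. Should full non-vanishing resist a clean proof, a fall-back is to induct on $M$ instead: at any exceptional $\xi$ with $\Omega(\xi^4)=0$ the hypothesized relation collapses to $p_M(\xi)\Omega(\xi)^M=0$, which together with $\Omega(\xi)\ne 0$ forces $p_M$ to vanish at that $\xi$; combining these exceptional vanishings of $p_M$ with the argument above on the generic locus would then reduce the problem to degree $M-1$. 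Either route, once the division has been justified, concludes by invoking Lemma~\ref{lem1a}.
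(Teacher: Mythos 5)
Your main argument — divide by $\Omega(\xi^4)^M$, substitute $\Omega(\xi)/\Omega(\xi^4)=\omega(\xi)/\rho^2$ from~\eqref{ome} to get $\sum_{m}\rho^{-2m}p_m(\xi)\omega(\xi)^m=0$, and invoke the transcendence of $\omega$ from Lemma~\ref{lem1a} — is exactly the paper's (one-line) proof. So the core of the proposal is correct and identical in route.

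The extra caution you raise about $\Omega(\xi^4)\ne 0$ is a fair observation, but it is not a gap specific to your proof: the paper's entire framework silently assumes $\Omega$ (equivalently, $\omega$) is nonzero on the roots of unity of degree $4^n$ — this is already used in stating Lemma~\ref{lem1} with a multiplicative $1+O(1-z)$ error, in defining $\omega$ via~\eqref{ome}, and in the proof of Lemma~\ref{lem1a}, where one multiplies by $(z/\omega(z))^M$ and applies~\eqref{ACF} (which needs $\omega(z^4)\ne 0$). So the paper has not supplied the check either; your instinct to verify $\Omega(\pm 1),\Omega(\pm i)\ne 0$ and attempt an induction is the right place to look if one wants to make this fully rigorous. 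Your fall-back, by contrast, does not quite cohere as stated: vanishing of $p_M$ only at the exceptional $\xi$'s (a possibly sparse set) does not by itself force $p_M\equiv 0$, and the generic-locus argument via Lemma~\ref{lem1a} already yields all $p_m\equiv 0$ without any induction on $M$ once one accepts that Lemma~\ref{lem1a} applies on that locus; so the induction on $M$ neither saves you from the nonvanishing issue nor is needed if the issue is resolved. The honest summary is: same proof as the paper, plus a correct diagnosis of a hypothesis the paper leaves implicit, plus a fall-back that you should discard.
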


\begin{proof}
Indeed, the equation from the hypothesis of the lemma is equivalent to the identity
$$
\sum_{m=0}^Mp_m(z)\rho^{-2m}\omega(z)^m=0
$$
on the set of roots of unity of degree $4^n$ where $n=0,1,2,\dots$\,.
This contradicts the transcendence of $\omega(z)$ established in Lemma~\ref{lem1a}.
\end{proof}

Denoting
$$
F_k(z):=\biggl(z\,\frac{\d}{\d z}\biggr)^kF(z)
$$
and using the fact that $C(z)$ is real analytic,
we have, as $z\to 1^-$, that
$$
F_k(\xi z)=\prod_{j=0}^{k-1}(\lg\rho+j)\cdot\Omega(\xi)\frac{C(z)}{(1-z)^{\lg\rho+k}}(1+O(1-z)),
$$
since this is true for $k=0$ and we simply differentiate it as many times as needed. From now on,
we can consider the limit as $z\to1^-$ along the sequence $\exp(-t_04^{-n})$ for integers $n\ge1$, for
some fixed $t_0$, so that $C(z)$ is constant along the sequence.
Finally, we can write the functional equation for the derivatives in the form
\begin{equation}
F_k(z)=\bigl(4^k(1+z+z^2)F_k(z^4)-16^kz^4F_k(z^{16})\bigr)\cdot(1+o(1))
\label{ARF}
\end{equation}
as $z$ approaches any root of unity of degree $4^n$, because the terms in $o(1)$
involve the derivatives of $F$ of order smaller than~$k$.
(In fact, we will only use \eqref{ARF} for $k=1$.)

We are now in a position to present the proof of Theorem~\ref{funcs}.

\begin{proof}[Proof of Theorem \textup{\ref{funcs}}]
For the sake of a contradiction, assume that the theorem is false and that we have an algebraic relation
\begin{equation}
\sum_{\bm=(m_0,m_1,m_2,m_3)\in\bM}p_{\bm}(z)F_0(z)^{m_0}F_1(z)^{m_1}F_0(z^4)^{m_2}F_1(z^4)^{m_3}=0,
\label{SF}
\end{equation}
where the set $\bM$ of multi-indices $\bm\in\mathbb Z_{\ge 0}^4$
is finite and none of the polynomials $p_{\bm}(z)$ in the sum is identically zero.
Without loss of generality, we can assume that the polynomial
$\sum_{\bm}p_{\bm}(z)y_0^{m_0}y_1^{m_1}y_2^{m_2}y_3^{m_3}$ in \emph{five} variables is irreducible.

In the first part of our proof, we discuss the algebraic independence of
$F_0(z),F_1(z)$, and $F_0(z^4)$ only (so that the dependence on $y_3$ is
suppressed); this scheme is general for this particular case as well as for the
one with $F_1(z^4)$.

Let $\xi$ be any root of unity of degree $4^n$ for some $n\ge0$. Note that as $z\to 1^-$, we have
\begin{equation*}
F_0(\xi z)^{m_0}F_1(\xi z)^{m_1}F_0\bigl((\xi z)^4\bigr)^{m_2}F_1\bigl((\xi z)^4\bigr)^{m_3}
=C_{\bm}\cdot\frac{\Omega(\xi)^{m_0+m_1}\Omega(\xi^4)^{m_2+m_3}}{(1-z)^{(\lg\rho)|\bm|+(m_1+m_3)}}(1+o(1))
\end{equation*}
where $|\bm|:=m_0+m_1+m_2+m_3$,
$$
C_{\bm}:=\frac{C^{|\bm|}}{4^{m_3}}\biggl(\frac{3-\sqrt5}2\biggr)^{m_2+m_3}(\lg\rho)^{m_1+m_3},
$$
and $C=C(e^{-t_0/4})$ does not depend on $\xi$ or $z$, the latter chosen along the sequence.

Denote by $\bM'$ the subset of all multi-indices of $\bM$ for which the quantity
$$
\beta:=(\lg\rho)|\bm|+m_1+m_3
$$
is maximal; in particular, $|\bm|$ and $m_1+m_3$ are the same for all $\bm\in\bM'$.

Substituting $\xi z$ for $z$ in \eqref{SF}, multiplying all the terms in the resulted sum by $(1-z)^\beta$, and letting $z\to1^-$, we deduce that
\begin{equation}
\sum_{\bm\in\bM'}C_{\bm}\cdot p_{\bm}(\xi)\cdot\Omega(\xi)^{m_0+m_1}\Omega(\xi^4)^{m_2+m_3}=0
\label{SF1}
\end{equation}
for any root of unity $\xi$ under consideration.
If there is no dependence on $F_1(z^4)$ in \eqref{SF} (hence in \eqref{SF1}) then
the summation in $m_3$ is suppressed;
in this case $M:=|\bm|=m_0+m_1+m_2$ and $M':=m_1$ are constant for all indices $\bm\in\bM'$,
so that equation \eqref{SF1} becomes
\begin{equation*}
\sum_{\bm=(m_0,M',M-M'-m_0,0)\in\bM'}C_{\bm}\cdot p_{\bm}(\xi)\cdot\Omega(\xi)^{m_0}\Omega(\xi^4)^{M-m_0}=0
\end{equation*}
for any root of unity $\xi$ of degree $4^n$.
By Lemma~\ref{lem2}, this is only possible when $p_{\bm}(z)=0$ identically, a contradiction to our choice of~$\bM$.
This means that the functions $F_0(z)$, $F_1(z)$ and $F_0(z^4)$ are algebraically independent.

The same argument in the case of general \eqref{SF} implies that
\begin{equation*}
\sum_{\bm=(m_0,N-m_0,M_0-m_0,M_1-(N-m_0))\in\bM'}C_{\bm}\cdot p_{\bm}(z)=0
\end{equation*}
for any $N$, where $M_0:=m_0+m_2$ and $M_1:=m_1+m_3$ are constant on~$\bM'$.

We next iterate relation \eqref{SF} and compute, again, the asymptotics of the
leading term as $z$ tends radially to a root of unity $\xi$ of degree~$4^n$.
For  this, we substitute $z^4$ for $z$ in the relation \eqref{SF}, multiply the
result by $16^{m_1+m_3}z^{4|\bm|}$ and use the expressions
for $F_0(z^{16})$ and $F_1(z^{16})$ given by the Mahler functional equations in
$F_0(z)$, $F_0(z^4)$, $F_1(z)$, and $F_1(z^4)$
(see also the proof of Theorem~\ref{numbers} below)
to arrive at
\begin{equation*}
\sum_{\bn\in\bN}q_{\bn}(z)F_0(z)^{n_0}F_1(z)^{n_1}F_0(z^4)^{n_2}F_1(z^4)^{n_3}=0,
\end{equation*}
where $\bN$ is defined analogous to $\bM$. The terms contributing the
leading asymptotics (which, of course, remains attached to the same $\beta$ as
before) correspond to the multi-indices
$\bn\in\bN'$ with the property $\beta=(\lg\rho)|\bn|+n_1+n_3$. Because of
\eqref{ARF}, controlling the coefficients
$q_{\bn}(z)$ for $\bn\in\bN'$ is much easier than for general $\bn\in\bN$.
Note that $\bN'$ is characterised by constant
$M_0=n_0+n_2$ and $M_1=n_1+n_3$ as was before $\bM'$.
The above transformation for the \emph{leading asymptotics terms} in~\eqref{SF}
assumes the form
\begin{align*}
\sum_{\bm\in\bM'}
& 16^{m_1}z^{4(m_0+m_1)}p_{\bm}(z^4)F_0(z^4)^{m_0}\bigl((1+z+z^2)F_0(z^4)-F_0(z)\bigr)^{m_2}
\\ &\qquad\times
F_1(z^4)^{m_1}\bigl(4(1+z+z^2)F_1(z^4)-F_1(z)\bigr)^{m_3}
\\
&=\sum_{\bm\in\bM'}16^{m_1}z^{4(m_0+m_1)}p_{\bm}(z^4)
\\ &\qquad\times
\sum_{n_0=0}^{m_2}(-1)^{n_0}\binom{m_2}{n_0}(1+z+z^2)^{m_2-n_0}
F_0(z)^{n_0}F_0(z^4)^{m_2+m_0-n_0}
\\ &\qquad\times
\sum_{n_1=0}^{m_3}(-1)^{n_1}\binom{m_3}{n_1}4^{m_3-n_1}(1+z+z^2)^{m_3-n_1}F_1(z)^{n_1}F_1(z^4)^{m_3+m_1-n_1}
\end{align*}
implying
\begin{align*}
q_{\bn}(z)
&=(-1)^{n_0+n_1}\sum_{\substack{n_0\le m_2\le n_0+n_2\\n_1\le m_3\le n_1+n_3}}
16^{n_1+n_3-m_3}\,4^{m_3-n_1}\binom{m_2}{n_0}\binom{m_3}{n_1}
\\ &\quad\times
z^{4(|\bn|-m_2-m_3)}(1+z+z^2)^{m_2+m_3-n_0-n_1}p_{(n_0+n_2-m_2,n_1+n_3-m_3,m_2,m_3)}(z^4)
\displaybreak[2]\\
&=(-1)^{n_0+n_1}\sum_{\substack{n_0\le m_2\le M_0\\n_1\le m_3\le M_1}}
16^{M_1}\,4^{-m_3-n_1}\binom{m_2}{n_0}\binom{m_3}{n_1}
\\ &\quad\times
z^{4(M_0-m_2+M_1-m_3)}(1+z+z^2)^{m_2+m_3-n_0-n_1}p_{(M_0-m_2,M_1-m_3,m_2,m_3)}(z^4)
\\
&=(-1)^{n_0+n_1}16^{M_1}\sum_{\substack{0\le m_0\le M_0-n_0\\0\le m_1\le M_1-n_1}}
4^{m_1-n_1-M_1}\binom{M_0-m_0}{n_0}\binom{M_1-m_1}{n_1}
\\ &\quad\times
z^{4(m_0+m_1)}(1+z+z^2)^{M_0+M_1-m_0-m_1-n_0-n_1}p_{(m_0,m_1,M_0-m_0,M_1-m_1)}(z^4)
\end{align*}
for all $\bn\in\bN'$, where for the last equality we switched to summation
over $m_0=M_0-m_2$ and $m_1=M_1-m_3$.

In view of our assumption of the irreducibility of the original algebraic relation \eqref{SF},
the newer relation must be proportional to it; that is, the polynomial
$\sum_{\bm\in\bM}p_{\bm}(z)y_0^{m_0}y_1^{m_1}y_2^{m_2}y_3^{m_3}$ divides
$\sum_{\bn\in\bN}q_{\bn}(z)y_0^{n_0}y_1^{n_1}y_2^{n_2}y_3^{n_3}$ in the polynomial ring $\mathbb C[z,y_0,y_1,y_2,y_3]$.
In particular, the leading asymptotic parts of these polynomials,
$$
\sum_{\bn\in\bN'}q_{\bn}(z)y_0^{n_0}y_1^{n_1}y_2^{n_2}y_3^{n_3}
=y_2^{M_0}y_3^{M_1}\sum_{\bn\in\bN'}q_{\bn}(z)\biggl(\frac{y_0}{y_2}\biggr)^{n_0}\biggl(\frac{y_1}{y_3}\biggr)^{n_1}
$$
and
$$
\sum_{\bm\in\bM'}p_{\bm}(z)y_0^{m_0}y_1^{m_1}y_2^{m_2}y_3^{m_3}
=y_2^{M_0}y_3^{M_1}\sum_{\bm\in\bM'}p_{\bm}(z)\biggl(\frac{y_0}{y_2}\biggr)^{m_0}\biggl(\frac{y_1}{y_3}\biggr)^{m_1},
$$
must be proportional, hence their quotient must be a polynomial in $z$. In other words,
the sets $\bN'$ and $\bM'$ coincide (unless the former is empty, meaning
that $q_{\bm}(z)=0$ identically for all $\bm\in\bM'$)
and $q_{\bm}(z)=q(z)p_{\bm}(z)$ for all $\bm\in\bM'$ for some $q(z)\in\mathbb C[z]$.
We define
$$
\wh p_{m_0,m_1}(z):=4^{m_1}p_{m_0,m_1,M_0-m_0,M_1-m_1}(z)
\quad\text{for}\; 0\le m_0\le M_0, \; 0\le m_1\le M_1,
$$
so that
$$
\sum_{m_0=0}^{M_0}\sum_{m_1=0}^{M_1}p_{(m_0,m_1,M_0-m_0,M_1-m_1)}(z)y_0^{m_0}y_1^{m_1}
=\sum_{m_0=0}^{M_0}\sum_{m_1=0}^{M_1}\wh p_{m_0,m_1}(z)y_0^{m_0}\biggl(\frac{y_1}4\biggr)^{m_1},
$$
and the above proportionality relation reads
\begin{multline*}
q(z)\sum_{m_0=0}^{M_0}\sum_{m_1=0}^{M_1}\wh p_{m_0,m_1}(z)y_0^{m_0}y_1^{m_1}
\\
=4^{M_1}\sum_{m_0=0}^{M_0}\sum_{m_1=0}^{M_1}\wh p_{m_0,m_1}(z^4)(1+z+z^2-zy_0)^{M_0-m_0}(1+z+z^2-zy_1)^{M_1-m_1}.
\end{multline*}
However, it follows from the case $s=1$ of Theorem~\ref{th-tensor} that this is not possible;
that is, the polynomials must all be identically zero.
\end{proof}

In this final part of the section, we prove Theorem~\ref{numbers}, by applying Theorem~\ref{funcs}
along with a general result in Mahler's method due to Ku.~Nishioka \cite{N1990}; see also her monograph~\cite{N1996},
in particular, Theorem~4.2.1 there.

\begin{proposition}[Ku.~Nishioka \cite{N1990}]
\label{Nishioka}
Let $K$ be an algebraic number field and let $k\ge 2$ be a positive integer.
Let $f_1(z),\dots,f_d(z)\in K[[z]]$ and write $\bbf(z)$ for the column-vector $(f_1(z),\ldots,f_d(z))^T$.
If
$$
\bbf(z^k)=\bB(z)\bbf(z)
$$
for some matrix $\bB(z)\in K(z)^{d\times d}$ and $\alpha$ is a nonzero algebraic number in the radius of convergence of $\bbf(z)$
such that $\alpha^{k^j}$ is not a pole of $\bB(z)$ for any $j\ge 0$,
then
$$
\trdeg_{\mathbb{Q}}\mathbb{Q}(f_1(\alpha),\dots,f_d(\alpha))\ge \trdeg_{K(z)}K(z)(f_1(z),\dots,f_d(z)).
$$
\end{proposition}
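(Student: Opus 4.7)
The plan is to prove this classical theorem by contradiction via the Mahler method: construct an auxiliary polynomial with Siegel's lemma, propagate it through the functional equation to the points $\alpha^{k^n}$, and compare an analytic upper bound against an arithmetic Liouville-type lower bound. Set $t:=\trdeg_{K(z)}K(z)(f_1(z),\dots,f_d(z))$ and suppose, for a contradiction, that $s:=\trdeg_{\mathbb{Q}}\mathbb{Q}(f_1(\alpha),\dots,f_d(\alpha))<t$. After reindexing I may assume $f_1(z),\dots,f_t(z)$ are algebraically independent over $K(z)$, whereas $f_1(\alpha),\dots,f_t(\alpha)$ satisfy a non-trivial algebraic relation over $\mathbb{Q}$.

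For a large parameter $N$, apply Siegel's lemma to the $\mathcal{O}_K$-module of polynomials $P(z,y_1,\dots,y_d)$ of degrees at most $N$ in each variable. This module has rank of order $N^{d+1}$, while its image in the coordinate ring $K[z,y_1,\dots,y_d]/\mathcal{I}$ of the algebraic relations $\mathcal{I}$ satisfied by $\bbf(z)$ over $K(z)$ has rank at most $C\,N^{t+1}$ by a Hilbert-polynomial estimate. Because $t<d$, one can impose $cN^{d+1}$ vanishing conditions on the Taylor coefficients of $E(z):=P(z,\bbf(z))$ at $z=0$ and still avoid $\mathcal{I}$; Siegel's lemma then produces a non-zero $P$ of controlled height such that $E(z)\not\equiv 0$ but $\operatorname{ord}_{z=0}E\gg N^{d+1}$.

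Iterating $\bbf(z^k)=\bB(z)\bbf(z)$ yields $\bbf(z^{k^n})=\bB_n(z)\bbf(z)$ for $\bB_n(z):=\bB(z^{k^{n-1}})\cdots\bB(z)$, hence $E(z^{k^n})=\Delta_n(z)^{-1}P_n(z,\bbf(z))$ with $P_n\in K[z,y_1,\dots,y_d]$ and $\Delta_n(z)$ a product of shifted denominators of $\bB$. The hypothesis that no $\alpha^{k^j}$ is a pole of $\bB$ ensures $\Delta_n(\alpha)\ne 0$ and controls its height. Evaluating at $z=\alpha$ gives $\Delta_n(\alpha)\,E(\alpha^{k^n})=P_n(\alpha,\bbf(\alpha))$; the analytic side decays doubly-exponentially, like $|\alpha|^{ck^n N^{d+1}}$, since $E$ has a zero of order $\Omega(N^{d+1})$ at $0$. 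Meanwhile the arithmetic side lies in $\mathbb{Q}(\bbf(\alpha))$, of transcendence degree $s<t$; the algebraic relation among $f_1(\alpha),\dots,f_t(\alpha)$ then allows one to clear denominators and invoke a Liouville inequality giving a singly-exponential-in-$k^n$ lower bound for $|P_n(\alpha,\bbf(\alpha))|$ whenever it is non-zero. For $n$ large enough the bounds are incompatible, forcing $P_n(\alpha,\bbf(\alpha))=0$, whence $E(\alpha^{k^n})=0$ for all large $n$. Since $\alpha^{k^n}\to 0$ and $E$ is a non-zero analytic germ at the origin, this contradicts the identity principle.

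The principal obstacle is the Siegel-lemma construction of $E$: one must simultaneously control the height and degrees of $P$, secure a high order of vanishing of $E$ at $z=0$, and --- most delicately --- ensure $E\not\equiv 0$, which hinges on the Hilbert-polynomial upper bound $C\,N^{t+1}$ for the coordinate ring of the functional relations. A secondary but still technical point is the book-keeping of the denominator $\Delta_n(\alpha)$, which must grow slowly enough not to destroy the analytic decay of $E(\alpha^{k^n})$; this is precisely where the avoidance of poles of $\bB$ along the orbit $\{\alpha^{k^j}\}$ is essential.
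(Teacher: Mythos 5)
The paper does not prove this proposition at all: it is quoted verbatim from Ku.~Nishioka's work (the reference \cite{N1990}, and Theorem~4.2.1 of her monograph \cite{N1996}), so there is no internal proof to compare with; your attempt has to stand on its own, and as written it has a genuine gap at its arithmetic core. The step ``the algebraic relation among $f_1(\alpha),\dots,f_t(\alpha)$ then allows one to clear denominators and invoke a Liouville inequality giving a \dots lower bound for $|P_n(\alpha,\boldsymbol f(\alpha))|$'' does not work: under the hypothesis $s<t$ the numbers $f_i(\alpha)$ are in general transcendental, and $P_n(\alpha,\boldsymbol f(\alpha))$ is a polynomial expression in transcendental numbers, so Liouville's inequality (which applies only to nonzero \emph{algebraic} numbers of controlled degree and height) gives nothing. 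Knowing one algebraic relation among the values does not let you eliminate them down to algebraic quantities. This is precisely the difficulty that makes the full transcendence-degree inequality deep: the classical Mahler/Siegel scheme you describe proves transcendence of a single value (the case $s=0$, $t\ge 1$), whereas the general statement requires a quantitative elimination-theoretic machine --- Nesterenko's method with heights and degrees of polynomial ideals (Chow forms), or equivalently Philippon's algebraic-independence criterion --- which is exactly what Nishioka imported into Mahler's method in \cite{N1990}. Your sketch implicitly assumes this machinery away, so the contradiction at the end is not reached.

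There is also a quantitative error in the construction of the auxiliary function. The space of functions $E(z)=P(z,\boldsymbol f(z))$ with $\deg P\le N$ in each variable has dimension of order $N^{t+1}$ (your Hilbert-polynomial bound), so you can force a nonzero $E$ to vanish at $z=0$ only to order roughly $N^{t+1}$, not $N^{d+1}$: if you impose $cN^{d+1}\gg N^{t+1}$ vanishing conditions, the solutions $P$ produced by Siegel's lemma may all lie in the ideal $\mathcal I$, i.e.\ give $E\equiv 0$, and nothing guarantees otherwise. This particular point is repairable (the classical method runs with $\operatorname{ord}_{z=0}E\asymp N^{t+1}$), but the Liouville step above is not, and that is where the proposal fails as a proof of Nishioka's theorem.
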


\begin{proof}[Proof of Theorem~\textup{\ref{numbers}}]
Apply Theorem \ref{funcs} and Proposition~\ref{Nishioka} with $K=\mathbb{Q}$, $k=d=4$,
$$
\bbf(z)= (F(z), F(z^{4}),F'(z),F'(z^{4}))^T,
$$
and
\begin{equation*}
\bB(z)=\begin{pmatrix}
0 & 1 & 0 & 0 \\
-\frac{1}{z^4} & \frac{1+z+z^2}{z^4} & 0 & 0 \\
0 & 0 & 0 & 1 \\
\frac{1}{4z^{20}} & -\frac{4+3z+2z^2}{16z^{20}} & -\frac{1}{16z^{19}} & \frac{1+z+z^2}{4z^{16}}
\end{pmatrix}.
\qedhere
\end{equation*}
\end{proof}

\section{Linear algebra and Fibonacci numbers}
\label{s4}

This section is entirely devoted to the proof of Theorem~\ref{th-tensor}.

Assume, on the contrary, that a non-trivial collection of polynomials $p_{m_0,\dots,m_s}(z)$ satisfying \eqref{zer-tensor} exists.
If the greatest common divisor of the polynomials is $p(z)$ then dividing them all by $p(z)$ we arrive at the relation \eqref{zer-tensor}
for the newer normalised polynomials but with $\lambda(z)$ replaced by $\lambda(z)p(z)/p(z^4)$.
Therefore, we can assume without loss of generality that the polynomials $p_{m_0,\dots,m_s}(z)$ in \eqref{zer-tensor} are relatively prime.
Furthermore, without loss of generality we can assume that the existing polynomials all have \emph{rational} coefficients
as the identity \eqref{zer-tensor} itself happens to be over the field of rationals,
so that $p_{m_0,\dots,m_s}(z)\in\mathbb{Q}[z]$ and $\lambda(z)\in\mathbb Q(z)$.

We first analyse the $s=0$ case of relation~\eqref{zer-tensor}. To this end, suppose there exist
$p_0(z),\dots,p_M(z)\in\mathbb{Q}[z]$ with $\gcd(p_0(z),\dots,p_M(z))=1$ such that
\begin{equation}
\label{zer}
\lambda(z)\sum_{m=0}^M p_m(z)y^m
=\sum_{m=0}^M p_m(z^4)(1+z+z^2-zy)^{M-m}
\end{equation}
for some rational function $\lambda(z)$. Assuming $\lambda(z)$ is nonzero, write
$\lambda(z)=a(z)/b(z)$, where $\gcd(a(z),b(z))=1$, so that \eqref{zer} becomes
\begin{equation}
\label{ab}
a(z)\sum_{m=0}^M p_m(z)y^m
=b(z)\sum_{m=0}^M p_m(z^4)(1+z+z^2-zy)^{M-m}.
\end{equation}
It follows immediately that any polynomial $p_m(z)$ on the left-hand side of
\eqref{ab} is divisible by $b(z)$, hence $b:=b(z)$ is a constant.
By substituting $x=1+z+z^2-zy$, we write \eqref{ab} as
\begin{equation*}
a(z)\sum_{m=0}^M p_m(z)z^{M-m}(1+z+z^2-x)^m
=bz^M\sum_{m=0}^M p_m(z^4)x^{M-m},
\end{equation*}
from which we conclude as before that each $p_m(z^4)$ is divisible by $a(z)/z^N$
where $z^N$ is the highest power of $z$ dividing $a(z)$.
As $\gcd(p_0(z^4),\dots,p_M(z^4))=1$ we find out that $a:=a(z)/z^N$ is a constant.
In summary, $\lambda(z)=\lambda z^N$ for some $\lambda\in\mathbb Q$ and $N\in\mathbb Z_{\ge 0}$; that is,
\begin{equation}
\label{lam}
\lambda z^N\sum_{m=0}^M p_m(z)y^m
=\sum_{m=0}^M p_m(z^4)(1+z+z^2-zy)^{M-m}.
\end{equation}
Note that the rational constant $\lambda$ must be nonzero
as otherwise, by substituting $z=1$ into~\eqref{lam}, all $p_m(z^4)$ would have the common divisor $z-1$.

The above argument clearly extends to tensor powers of the related operator.
In this way, we use the above to extend to the general case in the statement of the theorem.

\begin{lemma}
\label{lam-lem}
Assuming relation \eqref{zer-tensor} holds, we have $\lambda(z)=\lambda z^N$ for
some $\lambda\in\mathbb Q\setminus\{0\}$ and $N\in\mathbb Z_{\ge 0}$.
\end{lemma}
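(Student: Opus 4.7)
The plan is to mimic the $s=0$ argument already carried out in the excerpt, viewing \eqref{zer-tensor} as a polynomial identity in the tensor of variables $(y_0,\dots,y_s)$ over $\mathbb Q[z]$, and doing two monomial-basis changes (first in the $y_i$'s, then in $x_i:=1+z+z^2-zy_i$) to pin down the numerator and denominator of $\lambda(z)$.

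First I would write $\lambda(z)=a(z)/b(z)$ with $\gcd(a(z),b(z))=1$ and clear the denominator so that \eqref{zer-tensor} becomes a genuine polynomial identity in $\mathbb Q[z,y_0,\dots,y_s]$. Expanding the right-hand side in the monomial basis $\{y_0^{m_0}\cdots y_s^{m_s}\}$, every coefficient is a multiple of $b(z)$; matching coefficients with the left-hand side gives $b(z)\mid a(z)p_{\bm}(z)$ for each multi-index $\bm$, hence $b(z)\mid p_{\bm}(z)$ by coprimality of $a,b$. Since the $p_{\bm}(z)$ are jointly coprime, $b(z)$ is a nonzero constant.

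Next I would perform the change of variables $y_i=(1+z+z^2-x_i)/z$, multiply through by $z^{|\bM|}$ with $|\bM|=M_0+\dots+M_s$, and rewrite \eqref{zer-tensor} as
\[
a(z)\sum_{\bm}p_{\bm}(z)\prod_{i=0}^{s}z^{M_i-m_i}(1+z+z^2-x_i)^{m_i}
=bz^{|\bM|}\sum_{\bm}p_{\bm}(z^4)\prod_{i=0}^{s}x_i^{M_i-m_i}.
\]
Now expanding in the monomial basis $\{\prod_i x_i^{n_i}\}$ the left-hand side has every coefficient divisible by $a(z)$, while the coefficient of $\prod_i x_i^{M_i-m_i}$ on the right is $bz^{|\bM|}p_{\bm}(z^4)$. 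Writing $a(z)=z^N\tilde a(z)$ with $\tilde a(0)\ne 0$, coprimality of $a(z)$ and $b$ (a nonzero constant) forces $\tilde a(z)\mid p_{\bm}(z^4)$ for every $\bm$. Since $\{p_{\bm}(z)\}$ is jointly coprime so is $\{p_{\bm}(z^4)\}$, so $\tilde a(z)$ is a constant and $\lambda(z)=\lambda z^N$ with $\lambda\in\mathbb Q$ and $N\in\mathbb Z_{\ge0}$.

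Finally, to rule out $\lambda=0$ I would substitute $z=1$ into \eqref{zer-tensor} (with $\lambda(z)\equiv 0$) to obtain
\[
\sum_{\bm}p_{\bm}(1)\prod_{i=0}^{s}(3-y_i)^{M_i-m_i}=0
\]
as a polynomial identity in $y_0,\dots,y_s$. The monomials $\prod_i(3-y_i)^{M_i-m_i}$ are linearly independent in $\mathbb Q[y_0,\dots,y_s]$ (they form a tensor basis of $\bigotimes_i \mathbb Q[y_i]_{\le M_i}$), hence $p_{\bm}(1)=0$ for every $\bm$, i.e.\ $(z-1)$ is a common divisor of all $p_{\bm}(z)$, contradicting their coprimality. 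The only real technical point is the $y$-to-$x$ change of basis argument (making sure the multiplication by $z^{|\bM|}$ is exactly enough to clear denominators in the left-hand side and then tracking that $\tilde a(z)$ must divide every $p_{\bm}(z^4)$), but this runs parallel to the one-variable calculation already in the text, the monomial expansion in the $x_i$ just being a tensor product of the one-variable expansion.
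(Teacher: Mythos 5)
Your proof is correct and follows the paper's own approach exactly: the paper carries out the denominator/numerator divisibility argument (coefficient matching in the $y$-basis, then the substitution $x=1+z+z^2-zy$ and coefficient matching in the $x$-basis, then $z=1$ to rule out $\lambda=0$) for the $s=0$ case and then asserts that it ``clearly extends to tensor powers''; you have simply written out that tensor extension explicitly, which is precisely what the paper intends.
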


In our further investigation we will be interested in specialising identity \eqref{zer-tensor}
by choosing $z$ to be an appropriate root of unity. Any such specialisation leads to
a linear relation on the space of polynomials in $y_0,y_1,\dots,y_s$ of degree at most
$M_j$ in $y_j$ for each $j=0,1,\dots,s$.

Take a matrix
$$
\gamma:=\begin{pmatrix} a & b \\ c & d \end{pmatrix}\in\SL_2(\mathbb C),
$$
so that its determinant $ad-bc=1$,
and assume that its eigenvalues $\mu$ and $\mu^{-1}$ are distinct.
Consider the linear operator $$U_M(\gamma)\colon y^m\mapsto(ay+b)^m(cy+d)^{M-m}$$
on the linear space $\cP_M[y]$ of polynomials of degree at most~$M$.
Let the two row vectors $(\alpha_0,\beta_0)$ and $(\alpha_1,\beta_1)$ be the
eigenvectors of $\gamma$; that is,
$$
(\alpha_0,\beta_0)\gamma=\mu(\alpha_0,\beta_0)
\quad\text{and}\quad
(\alpha_1,\beta_1)\gamma=\mu^{-1}(\alpha_1,\beta_1).
$$

\begin{lemma}
\label{det-lem}
The spectrum of $U_M(\gamma)$ is the set
$$
\{\mu^k:-M\le k\le M,k\equiv M\;(\operatorname{mod}2)\},
$$
with the corresponding eigenpolynomials
\begin{gather*}
r_k(y)=r_k(\gamma;y)=(\alpha_0y+\beta_0)^{(M+k)/2}(\alpha_1y+\beta_1)^{(M-k)/2},
\\
-M\le k\le M, \quad k\equiv M\;(\operatorname{mod}2).
\end{gather*}
\end{lemma}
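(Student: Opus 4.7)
The plan is to recognize $U_M(\gamma)$ as the standard representation of $SL_2$ on degree-$M$ binary forms and then read off its spectrum from the eigenstructure of $\gamma$ itself. Concretely, identify $\cP_M[y]$ with the space of homogeneous forms $F(y_1,y_2)$ of degree $M$ via the dehomogenization $F(y_1,y_2)\mapsto F(y,1)$, so that the basis monomial $y^m$ corresponds to $y_1^m y_2^{M-m}$. Under this identification, the prescription $y^m\mapsto(ay+b)^m(cy+d)^{M-m}$ lifts to the linear substitution
\[
F(y_1,y_2)\longmapsto F(ay_1+by_2,\,cy_1+dy_2),
\]
and this substitution is \emph{multiplicative}: the image of a product of forms is the product of the images.

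Thus it suffices to find eigenforms of low degree and take products. The substitution sends a linear form $\alpha y_1+\beta y_2$ to $(\alpha a+\beta c)y_1+(\alpha b+\beta d)y_2$, so the action on the row of coefficients is precisely right multiplication by $\gamma$. The two row eigenvectors $(\alpha_0,\beta_0)$ and $(\alpha_1,\beta_1)$ of $\gamma$ therefore correspond to linear forms $L_0(y_1,y_2):=\alpha_0 y_1+\beta_0 y_2$ and $L_1(y_1,y_2):=\alpha_1 y_1+\beta_1 y_2$ that are mapped, respectively, to $\mu L_0$ and $\mu^{-1}L_1$. By multiplicativity, for each integer $k$ with $-M\le k\le M$ and $k\equiv M\pmod 2$ the product
\[
L_0^{(M+k)/2}L_1^{(M-k)/2}
\]
is an eigenform of the substitution with eigenvalue $\mu^{(M+k)/2}\mu^{-(M-k)/2}=\mu^k$, and dehomogenization ($y_2=1$) returns exactly the polynomial $r_k(y)$ in the statement.

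It remains to confirm that the $M+1$ polynomials $r_k(y)$ span (equivalently, form a basis of) $\cP_M[y]$. The assumption that the two eigenvalues $\mu$ and $\mu^{-1}$ of $\gamma$ are distinct forces the eigenvectors $(\alpha_0,\beta_0)$ and $(\alpha_1,\beta_1)$ to be linearly independent, so $L_0$ and $L_1$ are linearly independent linear forms on $(y_1,y_2)$. Consequently the $M+1$ binary forms $L_0^{(M+k)/2}L_1^{(M-k)/2}$ are linearly independent in the $(M+1)$-dimensional space of degree-$M$ forms, hence their dehomogenizations $r_k(y)$ are linearly independent in $\cP_M[y]$. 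This exhibits a full basis of eigenpolynomials and therefore identifies the spectrum as $\{\mu^k:-M\le k\le M,\ k\equiv M\pmod 2\}$, completing the proof.

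There is no real obstacle here: the only delicate point is the precise correspondence between left eigenvectors of $\gamma$ and eigenforms of the substitution, which must be set up carefully so that the twist between row/column actions and the transpose in $\gamma$ is handled correctly; once that bookkeeping is done, the eigenvalues and linear independence follow from one-line calculations.
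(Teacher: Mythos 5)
Your proof is correct and takes essentially the same approach as the paper's (very terse) proof: both rest on the observation that on linear forms the substitution acts on the coefficient row vector by right multiplication by $\gamma$, so the row eigenvectors of $\gamma$ give eigenpolynomials of $U_1(\gamma)$, and multiplicativity of the substitution then produces the eigenpolynomials $r_k$ with eigenvalue $\mu^k$. You simply make explicit, via the homogenization $y^m\leftrightarrow y_1^m y_2^{M-m}$, what the paper leaves implicit, and you add the short linear-independence check that the paper omits.
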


\begin{proof}
This follows immediately from the fact that the operator $U_1(\gamma)$ maps
$\alpha_0y+\beta_0$ onto $\mu(\alpha_0y+\beta_0)$
and $\alpha_1y+\beta_1$ onto $\mu^{-1}(\alpha_1y+\beta_1)$.
\end{proof}

Lemma \ref{det-lem} allows us to describe the spectrum of the (tensor-product)
operator
$$
U=U_{M_0}(\gamma)\otimes\dots\otimes U_{M_s}(\gamma)
\colon\prod_{j=0}^sy_j^{m_j}\mapsto\prod_{j=0}^s(ay_j+b)^{m_j}(cy_j+d)^{M_j-m_j}
$$
that acts on the space of polynomials in $y_0,y_1,\dots,y_s$ of degree at most
$M_j$ in $y_j$ for $j=0,1,\dots,s$,
as well as to explicitly produce the corresponding eigenpolynomials.

\begin{lemma}
\label{spec-lem}
The spectrum of $U$ is contained in $\mu^{\mathbb Z}$,
with the \textup(linearly independent\textup) eigenpolynomials
$$
\prod_{j=0}^sr_{k_j}(\gamma;y_j),
\qquad -M_j\le k_j\le M_j, \quad k_j\equiv M_j\;(\operatorname{mod}2)
\quad\text{for}\; j=0,1,\dots,s,
$$
corresponding to the eigenvalues $\mu^{k_0+k_1+\dots+k_s}$, respectively.
\end{lemma}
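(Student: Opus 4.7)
The plan is to bootstrap Lemma~\ref{det-lem} to the tensor product by exploiting the fact that $U$ acts as $U_{M_j}(\gamma)$ on the $j$-th tensor factor, with no interaction between factors. First I would identify the ambient space explicitly: the polynomial ring $\cP_{M_0}[y_0]\otimes\cP_{M_1}[y_1]\otimes\dots\otimes\cP_{M_s}[y_s]$ is canonically isomorphic to the space of polynomials in $y_0,\dots,y_s$ whose degree in $y_j$ is at most $M_j$, and the operator $U$ is literally the tensor product of the one-variable operators $U_{M_j}(\gamma)$ acting on the corresponding factor.

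Next, I would invoke the general principle that if $v_j$ is an eigenvector of a linear operator $T_j$ with eigenvalue $\nu_j$ for each $j$, then $v_0\otimes v_1\otimes\dots\otimes v_s$ is an eigenvector of $T_0\otimes\dots\otimes T_s$ with eigenvalue $\nu_0\nu_1\cdots\nu_s$. Applying this with $T_j=U_{M_j}(\gamma)$ and $v_j=r_{k_j}(\gamma;y_j)$, Lemma~\ref{det-lem} immediately yields that each product $\prod_{j=0}^s r_{k_j}(\gamma;y_j)$ is an eigenpolynomial of $U$ with eigenvalue $\mu^{k_0+\dots+k_s}$, where each $k_j$ runs over integers in $[-M_j,M_j]$ with $k_j\equiv M_j\pmod 2$. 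In particular, all eigenvalues lie in $\mu^{\mathbb Z}$.

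Linear independence of the resulting family follows from the fact that for each fixed $j$, the polynomials $r_{k_j}(\gamma;y_j)$ form a basis of $\cP_{M_j}[y_j]$ by Lemma~\ref{det-lem}, so their tensor products form a basis of the ambient space. The count matches: the number of admissible tuples $(k_0,\dots,k_s)$ equals $\prod_{j=0}^s(M_j+1)$, which is precisely the dimension of the space of polynomials with the given degree constraints. This shows both that the listed eigenpolynomials span everything (so that the spectrum is exactly $\{\mu^{k_0+\dots+k_s}\}$) and that no other eigenvalues can occur, giving the claim.

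The argument is essentially bookkeeping once the tensor-product observation is made, so I do not anticipate any serious obstacle; the only subtlety to keep track of is the parity restriction $k_j\equiv M_j\pmod 2$, which is inherited factor-by-factor from Lemma~\ref{det-lem} and which guarantees that the constructed family is in bijection with a basis of the ambient polynomial space.
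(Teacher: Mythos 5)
Your argument is exactly the (implicit) one the paper relies on: the paper states Lemma~\ref{spec-lem} with no written proof, treating it as an immediate consequence of Lemma~\ref{det-lem} via the standard tensor-product-of-eigenvectors principle, and your proposal spells that out correctly, including the dimension count showing the products form a basis. No gaps.
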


Substitution $z=1$ into \eqref{zer-tensor}, where $\lambda(z)=\lambda z^N$, brings our case to
\begin{align*}
&
\lambda\sum_{\substack{0\le m_j\le M_j\\j=0,1,\dots,s}}
p_{m_0,\dots,m_s}(1)y_0^{m_0}\dotsb y_s^{m_s}
\\ &\qquad
=\sum_{\substack{0\le m_j\le M_j\\j=0,1,\dots,s}}p_{m_0,\dots,m_s}(1)
(3-y_0)^{M_0-m_0}\dotsb(3-y_s)^{M_s-m_s}.
\end{align*}
This corresponds to an eigenvector $r(y_0,\dots,y_s)$ of the operator $U$
when $\gamma=\bigl(\begin{smallmatrix} 0 & 1 \\ -1 & 3 \end{smallmatrix}\bigr)\in\SL_2(\mathbb C)$.
We get $\mu=(3+\sqrt5)/2=\rho^2$,
$$
\alpha_0y+\beta_0=\mu^{-1}y-1 \quad\text{and}\quad \alpha_1y+\beta_1=\mu y-1.
$$
(Of course, we exclude the trivial case $r(y_0,\dots,y_s)=0$ as it would imply
that all $p_{m_0,\dots,m_s}(z)$ are divisible by $z-1$.)
Because all $p_{m_0,\dots,m_s}(z)\in\mathbb Q[z]$, we have $r(y_0,\dots,y_s)\in
\mathbb Q[y_0,\dots,y_s]$, so that
none of the irrational values in $\mu^{\mathbb Z}$ can show up as $\lambda$. In
other words, $\lambda=1$
and the structure of the tensor product above dictates $M_0+\dots+M_s$ to be even
and further produces
\begin{equation}
\label{eigen}
r(y_0,\dots,y_s)
=\sum_{\substack{|k_j|\le M_j, \; k_j\equiv M_j\;(\operatorname{mod}2)\\
k_0+\dots+k_s=0}}C_{k_0,\dots,k_s}\cdot r_{k_0}(y_0)\dotsb r_{k_s}(y_s)
\end{equation}
where $C_{k_0,\dots,k_s}\in\mathbb Q[\mu]$. We do not require the form
\eqref{eigen}, but only the fact $\lambda=1$.

\begin{lemma}
\label{sharp-lem}
Assuming relation \eqref{zer-tensor} holds, we have $\lambda(z)=z^N$ for some $N\in\mathbb Z_{\ge 0}$\textup:
\begin{multline}
\label{lam-z00}
z^N\sum_{\substack{0\le m_j\le M_j\\j=0,1,\dots,s}}p_{m_0,\dots,m_s}(z)y_0^{m_0}\dotsb y_s^{m_s}
\\[-15pt]
=\sum_{\substack{0\le m_j\le M_j\\j=0,1,\dots,s}}p_{m_0,\dots,m_s}(z^4)\prod_{i=0}^s(1+z+z^2-zy_i)^{M_i-m_i}.
\end{multline}
\end{lemma}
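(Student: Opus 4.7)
The plan is to combine Lemma \ref{lam-lem} with the spectral analysis of the tensor operator $U$ via the specialization $z = 1$. By Lemma \ref{lam-lem}, we may write $\lambda(z) = \lambda z^N$ for some $\lambda \in \mathbb{Q} \setminus \{0\}$ and $N \in \mathbb{Z}_{\ge 0}$, so it remains only to show $\lambda = 1$.

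Substituting $z = 1$ into \eqref{zer-tensor} turns every factor $1 + z + z^2 - z y_i$ into $3 - y_i$. Setting
$$
r(y_0,\dots,y_s) := \sum_{\substack{0\le m_j\le M_j\\j=0,1,\dots,s}} p_{m_0,\dots,m_s}(1)\, y_0^{m_0}\dotsb y_s^{m_s},
$$
the specialized identity reads $\lambda\, r(y_0,\dots,y_s) = U[r](y_0,\dots,y_s)$, where $U = U_{M_0}(\gamma) \otimes \dotsb \otimes U_{M_s}(\gamma)$ with
$$
\gamma = \begin{pmatrix} 0 & 1 \\ -1 & 3 \end{pmatrix} \in \SL_2(\mathbb{C}).
$$
This is exactly the tensor-product operator from Lemma \ref{spec-lem}, so either $r \equiv 0$ or $\lambda$ lies in the spectrum of $U$, which is contained in $\mu^{\mathbb{Z}}$ with $\mu = (3+\sqrt{5})/2 = \rho^2$.

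The case $r \equiv 0$ is ruled out immediately: it would force $p_{m_0,\dots,m_s}(1) = 0$ for every multi-index, hence $(z-1)$ would divide every $p_{m_0,\dots,m_s}(z)$, contradicting the assumption that these polynomials are relatively prime. In the remaining case, $\lambda = \mu^k$ for some $k \in \mathbb{Z}$, but $\lambda$ is rational by construction. The only step that requires a little verification is that $\mu^k \notin \mathbb{Q}$ for $k \ne 0$: since $\mu$ has minimal polynomial $x^2 - 3x + 1$ with conjugate root $\mu^{-1}$, if $\mu^k \in \mathbb{Q}$ then both $\mu$ and $\mu^{-1}$ would satisfy $x^k = \mu^k$, forcing $\mu^{2k} = 1$ and so $\mu$ a root of unity, which is absurd as $\mu > 1$. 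Therefore $k = 0$, i.e., $\lambda = 1$, and we obtain \eqref{lam-z00}.

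There is no substantive obstacle here; the work has already been carried out by Lemmas \ref{lam-lem} and \ref{spec-lem} together with the tensor-operator formalism built up in this section. The only subtlety worth being explicit about is the exclusion of the trivial eigenvector $r \equiv 0$, for which the coprimality normalization of the $p_{m_0,\dots,m_s}(z)$ made at the very beginning of the section is essential.
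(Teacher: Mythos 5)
Your proposal is correct and follows the paper's own route exactly: apply Lemma~\ref{lam-lem}, specialise at $z=1$ to obtain a nonzero eigenvector of the tensor operator $U$ for $\gamma=\bigl(\begin{smallmatrix}0&1\\-1&3\end{smallmatrix}\bigr)$, invoke Lemma~\ref{spec-lem} to place $\lambda$ in $\mu^{\mathbb Z}$, rule out $r\equiv 0$ via coprimality, and conclude $\lambda=1$ from rationality. The only difference is that you spell out why $\mu^k\notin\mathbb Q$ for $k\neq 0$, a small point the paper leaves implicit.
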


We now take any prime $p>3$, a root of unity $\zeta_p$ of degree $p$, and the matrix
\begin{gather}
\label{gammap}
\gamma_p:=g(\zeta_p^{4^{(p-1)/2-1}})g(\zeta_p^{4^{(p-1)/2-2}})\dotsb g(\zeta_p^4)g(\zeta_p),
\\
\text{where}\quad
g(z)=\begin{pmatrix} 0 & 1 \\ -z & z^2+z+1 \end{pmatrix}.
\nonumber
\end{gather}
If we write $\gamma_p=\bigl(\begin{smallmatrix} a & b \\ c & d \end{smallmatrix}\bigr)$, iterate the right-hand side of~\eqref{lam-z00},
and substitute $z=\zeta_p$, then we obtain
\begin{multline}
\label{lam-z10}
\zeta_p^N\sum_{\substack{0\le m_j\le M_j\\j=0,1,\dots,s}}
p_{m_0,\dots,m_s}(\zeta_p)y_0^{m_0}\dotsb y_s^{m_s}
\\[-15pt]
=\sum_{\substack{0\le m_j\le M_j\\j=0,1,\dots,s}}
p_{m_0,\dots,m_s}(\zeta_p)\prod_{i=0}^s (ay_i+b)^{m_i}(cy_i+d)^{M_i-m_i},
\end{multline}
because $\zeta_p^{4^{(p-1)/2}}=\zeta_p^{2^{p-1}}=\zeta_p$. Note that $\det g(z)=z$, so that
$$
\det\gamma_p=\prod_{j=0}^{(p-1)/2-1}\zeta_p^{4^j}=\zeta_p^{(2^{p-1}-1)/3}=1
$$
for primes $p>3$, thus establishing that $\gamma_p\in\SL_2(\mathbb C)$.

\begin{lemma}
\label{p-lem}
Let $\mu$ and $\mu^{-1}$ be the eigenvalues of $\gamma_p$.
If $\mu^p\ne1$ then $N\equiv0\pmod p$ in \eqref{lam-z00}.
\end{lemma}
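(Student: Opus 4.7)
The plan is to interpret \eqref{lam-z10}, specialised at $z=\zeta_p$, as an eigenvalue equation for the tensor-product operator $U := U_{M_0}(\gamma_p)\otimes\dotsb\otimes U_{M_s}(\gamma_p)$ analysed in Lemma~\ref{spec-lem}. Setting
$$
P(y_0,\dots,y_s) := \sum_{\substack{0\le m_j\le M_j\\ j=0,1,\dots,s}} p_{m_0,\dots,m_s}(\zeta_p)\, y_0^{m_0}\dotsb y_s^{m_s},
$$
relation \eqref{lam-z10} reads $U(P)=\zeta_p^N P$. The first step is to note $P\ne 0$: since the polynomials $p_{m_0,\dots,m_s}(z)\in\mathbb{Q}[z]$ are relatively prime, they cannot all vanish at $\zeta_p$ (else $\Phi_p(z)$ would be a common factor). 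Hence $\zeta_p^N$ is a genuine eigenvalue of $U$, and Lemma~\ref{spec-lem} forces $\zeta_p^N=\mu^k$ for some integer~$k$.

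The heart of the argument, which is also its main obstacle, is to combine $\mu^k=\zeta_p^N$ with $\mu^p\ne 1$ to force $p\mid N$. I would proceed via Galois theory, splitting into two cases. If $\mu\notin\mathbb{Q}(\zeta_p)$, then $\mu$ is quadratic over $\mathbb{Q}(\zeta_p)$ with minimal polynomial $\lambda^2-\tau\lambda+1$ where $\tau=\operatorname{tr}(\gamma_p)\in\mathbb{Z}[\zeta_p]$; the nontrivial element of $\operatorname{Gal}(\mathbb{Q}(\zeta_p,\mu)/\mathbb{Q}(\zeta_p))$ swaps $\mu$ with $\mu^{-1}$ while fixing $\zeta_p$, so applying it to $\mu^k=\zeta_p^N$ gives $\mu^{2k}=1$ and $\zeta_p^N\in\{\pm 1\}$. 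Since $p\ge 5$ is odd, only $\zeta_p^N=1$ is admissible, giving $p\mid N$.

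If instead $\mu\in\mathbb{Q}(\zeta_p)$, I would use the automorphism $\sigma_4\colon\zeta_p\mapsto\zeta_p^4$. A short calculation (using $\zeta_p^{4^{(p-1)/2}}=\zeta_p$) shows that $\sigma_4$ cyclically rotates the factors in \eqref{gammap}, yielding $\sigma_4(\gamma_p)=g(\zeta_p)\,\gamma_p\,g(\zeta_p)^{-1}$; in particular $\sigma_4(\mu)$ is an eigenvalue of $\gamma_p$, so $\sigma_4(\mu)\in\{\mu,\mu^{-1}\}$. The case $\sigma_4(\mu)=\mu$ gives $\zeta_p^{3N}=1$ and hence $p\mid N$ since $p>3$; the case $\sigma_4(\mu)=\mu^{-1}$, after combining $\mu^k=\zeta_p^N$ with its image under $\sigma_4$, gives $\zeta_p^{5N}=1$ and hence $p\mid N$ for $p>5$. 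The only prime left uncovered is $p=5$, but a direct calculation shows that $\operatorname{tr}(\gamma_5)=2$, whence $\mu=1$ and $\mu^5=1$; the hypothesis of the lemma is thereby violated, so the statement is vacuous at $p=5$. The principal technical difficulty is this bifurcation, together with the small-prime check at $p=5$; a cleaner approach might track the orbit of $\mu$ under the subgroup $\langle\sigma_4\rangle$ of $\operatorname{Gal}(\mathbb{Q}(\zeta_p)/\mathbb{Q})$ uniformly.
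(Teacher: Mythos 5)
Your proposal is correct, and the first step (identifying \eqref{lam-z10} as the eigenvalue equation $U(P)=\zeta_p^N P$ with $P\ne0$ by coprimality of the $p_{m_0,\dots,m_s}$, whence $\zeta_p^N\in\mu^{\mathbb Z}$ via Lemma~\ref{spec-lem}) is exactly the paper's argument. Where you differ is that the paper's proof ends there: it simply asserts that $\zeta_p^N\in\mu^{\mathbb Z}$ together with $\mu^p\ne1$ ``is only possible when $\zeta_p^N=1$,'' with no further justification. Your Galois-theoretic argument supplies the missing step, and the case split is the right way to do it: when $\mu\notin\mathbb Q(\zeta_p)$ the quadratic conjugation $\mu\mapsto\mu^{-1}$ fixing $\zeta_p$ forces $\zeta_p^{2N}=1$; when $\mu\in\mathbb Q(\zeta_p)$ the cyclic-rotation identity $\sigma_4(\gamma_p)=g(\zeta_p)\,\gamma_p\,g(\zeta_p)^{-1}$ gives $\sigma_4(\mu)\in\{\mu,\mu^{-1}\}$, whence $\zeta_p^{3N}=1$ or $\zeta_p^{5N}=1$, settling all $p>5$; and at $p=5$ the direct computation $\operatorname{tr}\gamma_5=2$, so $\mu=1$, shows the lemma is vacuous. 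One pleasant observation your argument makes explicit (and the paper does not) is that the hypothesis $\mu^p\ne1$ plays no role for $p>5$: your Galois cases yield $p\mid N$ unconditionally there, and the hypothesis is only needed to dispose of the small prime. This is genuinely useful, since without some such argument the bare fact $\zeta_p^N=\mu^k$ with $\mu^p\ne1$ would not rule out, say, $\mu$ of order $2p$ with $k$ even; your $\sigma_4$-conjugacy step is exactly what eliminates this possibility.

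Two small points to tighten. First, in Case~1 you should say explicitly that $\mu^{2k}=1$ follows from $\mu^k=\mu^{-k}$ (obtained by applying $\sigma$ to both sides of $\mu^k=\zeta_p^N$ and comparing), and then $\zeta_p^{2N}=(\mu^k)^2=1$ gives $p\mid 2N$, hence $p\mid N$ since $p$ is odd; your phrasing ``$\mu^{2k}=1$ and $\zeta_p^N\in\{\pm1\}$'' is correct but compresses these two deductions. Second, when invoking the conjugation identity for $\sigma_4(\gamma_p)$, it is worth noting that the computation hinges on $\zeta_p^{4^{(p-1)/2}}=\zeta_p$ (that is, $2^{p-1}\equiv1\pmod p$), which the paper records just after \eqref{lam-z10}; stating it keeps the argument self-contained.
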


\begin{proof}
Comparing relation \eqref{lam-z10} with the result of Lemma~\ref{spec-lem} we conclude
that $\zeta_p^N\in\mu^{\mathbb Z}$, and the latter is only possible when $\zeta_p^N=1$.
\end{proof}

\begin{lemma}
\label{inf-lem}
There are infinitely many primes $p$ for
which the eigenvalues $\mu$ of the corresponding $\gamma_p$
are not $p$-th roots of unity.
\end{lemma}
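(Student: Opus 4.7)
The plan is to exploit the ramified prime $\mathfrak{p}:=(1-\zeta_p)\subset\mathbb Z[\zeta_p]$ sitting above~$p$, and to read $\operatorname{tr}\gamma_p$ modulo~$\mathfrak{p}$ in two different ways. Since every entry of $g(\zeta_p^{4^j})$ lies in $\mathbb Z[\zeta_p]$ and $\zeta_p\equiv1\pmod{\mathfrak p}$, each factor reduces to $g(1)=\bigl(\begin{smallmatrix}0&1\\-1&3\end{smallmatrix}\bigr)$ modulo~$\mathfrak p$, whence $\gamma_p\equiv g(1)^{(p-1)/2}\pmod{\mathfrak p}$. The matrix $g(1)$ has characteristic polynomial $\lambda^{2}-3\lambda+1$ with roots $\rho^{\pm2}$, so its $(p-1)/2$-th power has trace $\rho^{p-1}+\rho^{-(p-1)}$, which is the Lucas number $L_{p-1}$. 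Under the standard identification $\mathbb Z[\zeta_p]/\mathfrak p\cong\mathbb F_p$, and using that this isomorphism restricts to $\bmod\,p$ reduction on $\mathbb Z$, one therefore obtains $\operatorname{tr}\gamma_p\equiv L_{p-1}\pmod p$.

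On the other hand, if $\mu^{p}=1$ then $\mu=\zeta_p^{k}$ for some integer~$k$, so $\operatorname{tr}\gamma_p=\mu+\mu^{-1}=\zeta_p^{k}+\zeta_p^{-k}\equiv2\pmod{\mathfrak p}$. Combining this with the previous congruence would force the integer equality $L_{p-1}\equiv2\pmod p$. My next step is to exhibit an infinite family of primes where this congruence fails. For any odd prime $p$ with $\bigl(\tfrac{5}{p}\bigr)=-1$, equivalently $p\equiv\pm2\pmod5$, the golden ratio $\rho$ lies in $\mathbb F_{p^{2}}\setminus\mathbb F_{p}$ and Frobenius satisfies $\rho^{p}\equiv-\rho^{-1}\pmod p$ (since $\rho\cdot\overline{\rho}=-1$). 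Hence $\rho^{p-1}\equiv-\rho^{-2}\pmod p$, and
\[
L_{p-1}=\rho^{p-1}+\rho^{-(p-1)}\equiv-\rho^{-2}-\rho^{2}=-L_{2}=-3\pmod{p},
\]
which differs from~$2$ modulo~$p$ whenever $p\neq5$. Dirichlet's theorem on primes in arithmetic progressions then furnishes infinitely many such~$p$, completing the proof.

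The main subtlety I foresee is the reduction step: one must invoke the standard fact that $p$ is totally ramified in $\mathbb Q(\zeta_p)$ with $(p)=\mathfrak p^{p-1}$, so that $\mathfrak p=(1-\zeta_p)$ is prime, $\mathbb Z[\zeta_p]/\mathfrak p\cong\mathbb F_p$, and the residue map kills $1-\zeta_p$ while coinciding with $\bmod\,p$ on~$\mathbb Z$. Once this cyclotomic input is in place, the two trace computations fit together immediately and the Lucas-number congruence closes the argument; everything else is a direct Frobenius calculation in $\mathbb F_{p^{2}}$.
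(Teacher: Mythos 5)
Your proof is correct and takes a genuinely different route from the paper's. The paper's proof examines $\operatorname{tr}\gamma_p$ as an explicit $\mathbb Z$-linear combination of powers of $\zeta_p$: it reads off from $g(1)^s=\bigl(\begin{smallmatrix}-F_{2s-2}&F_{2s}\\-F_{2s}&F_{2s+2}\end{smallmatrix}\bigr)$ that the coefficient total is $F_{2s+2}-F_{2s-2}=L_{p-1}$, restricts to primes $p\equiv15\pmod{28}$ so that $7\mid p-1$ and $s=(p-1)/2$ is odd, and uses the Fibonacci congruence \eqref{fib1} to show that the number of monomials surviving in the irreducible form of the trace is $\equiv\pm3\pmod 7$, hence at least $3$, which is incompatible with the two-term expression $\zeta_p^\ell+\zeta_p^{-\ell}$. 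You instead reduce modulo the ramified prime $\mathfrak p=(1-\zeta_p)$: under $\mathbb Z[\zeta_p]/\mathfrak p\cong\mathbb F_p$ the trace collapses to $L_{p-1}$, any $\zeta_p^\ell+\zeta_p^{-\ell}$ collapses to $2$, and a Frobenius calculation in $\mathbb F_{p^2}$ gives $L_{p-1}\equiv-3\not\equiv2\pmod p$ whenever $\left(\tfrac{5}{p}\right)=-1$. Both arguments ultimately exploit the same coefficient total $L_{p-1}$, but you localise at $\mathfrak p$ and compute modulo~$p$, whereas the paper counts surviving monomials and computes modulo~$7$. Your version is cleaner, sidesteps the delicate bookkeeping of which monomials cancel in the irreducible form, and applies to the denser progression $p\equiv\pm2\pmod 5$ (density $1/2$ among primes, with $p>3$ automatic once $p>3$ is imposed for $\gamma_p$ to be defined) rather than $p\equiv15\pmod{28}$ (density $1/12$).
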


We have checked by direct computation that the only primes $p$ in the range $3<p<300$, for
which the condition $\mu^p\ne1$ is violated, are $p=5$ and $p=11$ (and $\mu=1$ in the two cases).
It is therefore natural to expect that we \emph{always} have the condition of Lemma~\ref{p-lem} satisfied
for primes $p>11$.

\begin{proof}[Proof of Lemma~\textup{\ref{inf-lem}}]
For positive exponents $e_1,\dots,e_s$ consider
$$
g(z^{e_1})g(z^{e_2})\dotsb g(z^{e_s})
=\begin{pmatrix} -a(z) & b(z) \\ -c(z) & d(z) \end{pmatrix}.
$$
Using induction on $s$, the coefficients of polynomials $a(z),b(z),c(z),d(z)$ are nonnegative integers;
furthermore,
$$
\begin{pmatrix} -a(1) & b(1) \\ -c(1) & d(1) \end{pmatrix}
=g(1)^s
=\begin{pmatrix} -F_0 & F_2 \\ -F_2 & F_4 \end{pmatrix}^s
=\begin{pmatrix} -F_{2s-2} & F_{2s} \\ -F_{2s} & F_{2s+2} \end{pmatrix},
$$
where $F_0=0$, $F_1=1$, and $F_n=F_{n-1}+F_{n-2}$ is the Fibonacci sequence. It
is not hard to verify that
\begin{equation}
\label{fib1}
F_{2n}\equiv\begin{cases}
0\;(\operatorname{mod}7) & \text{if $n\equiv0\;(\operatorname{mod}4)$}, \\
3\;(\operatorname{mod}7) & \text{if $n\equiv2\;(\operatorname{mod}8)$}, \\
4\;(\operatorname{mod}7) & \text{if $n\equiv6\;(\operatorname{mod}8)$},
\end{cases}
\qquad n=0,2,4,\dotsc.
\end{equation}

Consider now any prime $p\equiv15\;(\operatorname{mod}28)$; this means that $p-1$ is
divisible by~7 and that $s=(p-1)/2$, the number of the matrices $g(\,\cdot\,)$ in
the product for~$\gamma_p$, is odd. The trace of the matrix $\gamma_p$ is a
polynomial in $\zeta_p$ containing $F_{2s+2}$ monomials $\zeta_p^m$ minus
$F_{2s-2}$ monomials $\zeta_p^m$.
Note the the sums of the form $\sum_{j=1}^{p-1}\zeta_p^{jm}=0$, each involving $p-1$ terms,
for $j\not\equiv0\;(\operatorname{mod}p)$, and such sums only, can be cancelled from consideration,
thus leaving us with
$$
F_{2(s+1)}-F_{2(s-1)}-(p-1)N\equiv\pm3\;(\operatorname{mod}7)
$$
terms according to~\eqref{fib1}.
This implies that the trace of~$\gamma_p$ in its irreducible form is the sum of
\emph{at least three} monomials $\zeta_p^m$, corresponding to not necessarily
different $m\in\{0,1,\dots,p-1\}$; in particular, the trace cannot be written
in the form $\zeta_p^\ell+\zeta_p^{-\ell}$ for some~$\ell$. Therefore, the
eigenvalues $\mu,\mu^{-1}$ of $\gamma_p$ are not of the form $\zeta_p^{\ell},\zeta_p^{-\ell}$.
This completes the proof of Lemma~\ref{inf-lem}.
\end{proof}

\begin{proof}[Proof of Theorem~\textup{\ref{th-tensor}}]
It follows from Lemmas~\ref{p-lem} and \ref{inf-lem} that $N\equiv0\pmod p$ in \eqref{lam-z00} for infinitely many primes~$p$.
This means that $N=0$ and equation~\eqref{lam-z00} assumes the form
\begin{multline}
\label{lam-z000}
\sum_{\substack{0\le m_j\le M_j\\j=0,1,\dots,s}}p_{m_0,\dots,m_s}(z)y_0^{m_0}\dotsb y_s^{m_s}
\\[-15pt]
=\sum_{\substack{0\le m_j\le M_j\\j=0,1,\dots,s}}p_{m_0,\dots,m_s}(z^4)\prod_{i=0}^s(1+z+z^2-zy_i)^{M_i-m_i}.
\end{multline}
As $y_0,\dots,y_s$ are replaced with $c_0y_0,\dots,c_sy_s$, where $(c_0,\dots,c_s)$ varies over $\mathbb R^{s+1}$,
the generic degree in $z$ of the right-hand side in \eqref{lam-z000} is bounded
from below by $4d$, where $d$~denotes the maximal degree of the polynomials $p_{m_0,\dots,m_s}(z)$, while
the degree in~$z$ of the left-hand side in \eqref{lam-z000} is at most~$d$.
This can only happen when the polynomials are constant; however in the latter
circumstances we will still have a positive degree in~$z$
for the right-hand side in \eqref{lam-z000}, a contradiction completing the proof
of Theorem~\ref{th-tensor}.
\end{proof}

\section{Mahler functions at roots of unity}
\label{s5}

In this section, we discuss the structure of general Mahler functions at roots of unity and
provide	an alternative approach to the proof of Theorem~\ref{Fasymptotics}, which can be used in the
asymptotical study of Mahler functions of any degree.

The simplest possible Mahler functions are given as infinite products, so that it is natural to investigate
the asymptotics of
$$
P(z):=\prod_{j=0}^\infty\frac1{(1-\alpha z^{k^j})}
$$
as $z\to1^-$, where $\alpha\in\mathbb C$, $|\alpha|\le 1$. The recent
paper \cite{ABklMahler} provides crude estimates for the asymptotics of such products,
though earlier works \cite{dB1948,DF1996,M1940} already discuss the asymptotics in the `most natural' case $|\alpha|=1$;
see also the paper \cite{DukeNguyen}.

As in Section~\ref{sec:asymp}, we make use of the Mellin transform, and so we define
$$
\mathcal{P}(s):=\int_0^\infty\ln P(e^{-t})\,t^{s-1}\d t,
$$
which maps $e^{-\lambda t}$ to $\Gamma(s)\lambda^{-s}$.
Since
$$
\ln P(e^{-t})
=\sum_{j=0}^\infty\sum_{l=1}^\infty\frac1l\alpha^le^{-lk^jt},$$
we have
\begin{equation}
\label{MTP}
\mathcal{P}(s)=\Gamma(s)\sum_{j=0}^\infty\sum_{l=1}^\infty\frac1l\,\frac{\alpha^l}{(lk^j)^s}
=\frac{\Gamma(s)}{1-k^{-s}}\sum_{l=1}^\infty\frac{\alpha^l}{l^{s+1}}.
\end{equation}
Thus the asymptotics of $\ln P(e^{-t})$ as $t\to0^+$ is related to the values of
the meromorphic continuation of the Dirichlet series
$$
\frac{\mathcal{P}(s)}{\Gamma(s)}=\frac1{1-k^{-s}}\sum_{l=1}^\infty\frac{\alpha^l}{l^{s+1}}
$$
at negative integers \cite[Proposition~2]{Z2006}.
Without reproducing the standard analytical argument in this situation (see \cite{dB1948,DF1996} for details)
one gets, as $t\to 0^+$,
$$
P(e^{-t})=C(t)t^{(\ln(1-\alpha))/(\ln k)}(1+O(t))
$$
if $\alpha\ne1$, and
$$
P(e^{-t})=C(t)t^{-1/2}e^{(\ln^2t)/(2\ln k)}(1+O(t))
$$
if $\alpha=1$, for some positive and $(2\pi im/\ln k)$-periodic function $C(t)$ of~$t$.
Clearly, the asymptotics so obtained allow one to
write out the asymptotic behaviour of any solution of the Mahler equation
$$
f(z)=\prod_{j=1}^m(1-\xi_jz)\cdot f(z^k)
$$
along any radial limit as $z\to\xi$, where $\xi_1,\dots,\xi_m$ and $\xi$ are roots of unity.

This analysis, Theorem~\ref{Fasymptotics} and the approach we discuss below allow us
to expect similar asymptotic behaviour for other Mahler functions $f(z)$
satisfying functional equations of the form~\eqref{eq-mahler}.
That is, under some natural conditions imposed on the polynomials $a(z),a_0(z),\dots,a_d(z)$, the asymptotics of $f(z)$ as $z\to1^-$
is either of the form
$$
C(z)(1-z)^{c_1}(1+O(1-z))
$$
or
$$
C(z)(1-z)^{c_0}e^{c_2\ln^2(1-z)}(1+O(1-z))
$$
for some rational $c_0$ or $c_1$ of the form `$\ln(\text{an algebraic integer})/\ln k$' and $c_2$ of the form
`$\text{rational}/\ln k$', where the function $C(z)$ is assumed to have some oscillatory behaviour.

For our alternative method to prove Theorem~\ref{Fasymptotics}, we consider the function $F(z)$ as defined in
the introduction. If we let $z=e^{-t}$ where $t=4^{-x}$ and denote
$f(x)=F(z^{16})=F(e^{-16t})$, then the functional equation \eqref{Fdefn} assumes
the form
$$
f(x+2)-(1+e^{-t}+e^{-2t})f(x+1)+e^{-4t}f(x)=0.
$$
Using $|1-e^{-t}|<t$ for $t>0$, we can then recast this equation in the form
$$
f(x+2)-(3+a_1(x))f(x+1)+(1+a_2(x))f(x)=0,
$$
where $|a_1(x)|,|a_2(x)|\le 4\cdot4^{-x}$. Denoting the zeroes of the
characteristic polynomial $\lambda^2-3\lambda+1$, by $\lambda_1:=(3-\sqrt5)/2$
and $\lambda_2:=(3+\sqrt5)/2=\rho^2$, and applying the quantitative version of
Perron's theorem due to Coffman \cite{C1964} (see \cite[Theorem~2]{T1993} and
comments to it within for the explicit statement, as well as \cite{E1953} and
\cite{GK1953} for the predecessors), we deduce that
$$
f(x)=\tilde C\lambda_j^x(1+O(4^{-x}))
$$
as $x\to+\infty$ along $x\equiv x_0\pmod{\mathbb Z}$,
for some $\tilde C=\tilde C(x_0)>0$ and $j\in\{1,2\}$. A simple analysis then shows that $j=2$.

Note that $\tilde C(x)$ is a $1$-periodic real-analytic function in an interval $x>\sigma_0$
because of the analytic dependence of the solution of the difference equation on the initial data.
This implies that as $t\to0^+$
$$
F(e^{-t})=\frac{\hat C(t)}{t^{\lg\rho}}(1+O(t)),
$$
and further
\begin{equation*}
F(z)=\frac{C(z)}{(1-z)^{\ln\rho}}(1+O(1-z))
\qquad\text{as}\quad z\to1^-,
\end{equation*}
where $C(z)$ is real-analytic and satisfies $C(z)=C(z^4)$ for $z\in(0,1)$,
which is exactly the statement of Theorem~\ref{Fasymptotics}.

The Mellin-transform approach of Section~\ref{sec:asymp} and the difference-equation approach of this section
also make possible studying the asymptotic behaviour of Mahler functions at other roots of unity.
Here we briefly explain the situation on what happens with the particular example of $F(z)$.

Denote $\zeta_n$ a primitive root of unity of (odd) degree $n$.
For $\zeta_3$ we clearly have $\zeta_3^4=\zeta_3$, therefore the defining
equation \eqref{Fdefn} for $F(z)$ transforms to the equation
$$
\tilde F(z)=(1+\zeta_3z+\zeta_3^2z^2)\tilde F(z^4)-\zeta_3z^4\tilde F(z^{16})
$$
for the function $\tilde F(z):=F(\zeta_3z)$. The characteristic polynomial of
this recursion as $z\to1^-$ is $\lambda^2+\zeta_3$,
with the absolute values of both roots equal to 1, so that the theorem of Poincar\'e \cite{G1971}
applies to imply that $\tilde F(z)$ has oscillatory behaviour as $z\to1^-$.

Similarly, the difference equation for $F(z)$ gives rise to a difference equation
for $\tilde F(z)=F(\zeta_nz)$, because the former is equivalent
to a relation between $F(z)$, $F(z^{4^k})$ and $F(z^{4^{2k}})$ for any $k\ge 1$.
In particular, one can take $k=(p-1)/2$ when $n=p$ is a prime
(compare with the construction of $\gamma_p$ in Section~\ref{s4}).
Taking $n=5$ and choosing $k=2$, so that $\zeta_n^{4^k}=\zeta_n$,
we find the corresponding characteristic polynomial $\lambda^2-2\lambda+1$.
Again, the double zero $\lambda=1$ (of absolute value~1)
of the polynomial leads to the oscillatory behaviour of $\tilde F(z)$ as $z\to1^-$;
the same story happens for the choice $n=11$ and $k=6$.
The first `interesting' situation originates at $n=7$. Here $k=3$ and the
characteristic polynomial of the difference equation relating
$F(\zeta_7z)$, $F(\zeta_7z^{4^3})$ and $F(\zeta_7z^{4^6})$ is
$$
\lambda^2+(\zeta_7+\zeta_7^2+\zeta_7^4)\lambda+1=\lambda^2+\frac{-1\pm\sqrt{-7}}2\lambda+1.
$$
Its roots are
$$
\frac{1\pm\sqrt{-7}\pm\sqrt{-22\pm\sqrt{-7}}}4,
$$
whose absolute values are approximately $0.53101005$ and $1.88320350$.
The technical conditions of Coffman's version of Perron's theorem are not met in this case,
but the numerics support that they behave
$$
F(\zeta_7z)\sim C(z)(1-z)^{\ln(0.53101005)/(3\ln4)}
\qquad\text{as}\quad z\to1^-,
$$
where $C(z)$ oscillates.

The characteristic polynomial for $n=p$ is exactly the characteristic polynomial
of the matrix $\gamma_p$ in \eqref{gammap}. It may be interesting to look for the
known $L$-functions as potential Mellin transforms for such a sophisticated
behaviour at different roots of unity: recall that the Mellin transform \eqref{MTP}
used at the beginning of this section was, up to the unwanted factor $(1-k^{-s})^{-1}$, a
Dirichlet $L$-function. Though it is hard to expect anything significant
for the Mellin transform of $\ln F(z)$ as there are zeroes of $F(z)$ in the disc $|z|<1$
(compare with the footnote in Section~\ref{sec:asymp}), but $F(z)$ itself
looks a nice target for a reasonable $L$-function.

\begin{acknowledgements}
We thank the referees of the journal for their valuable feedback that helped us to improve presentation of the paper.
\end{acknowledgements}

\bibliographystyle{amsplain}
\def\cprime{$'$}
\providecommand{\bysame}{\leavevmode\hbox to3em{\hrulefill}\thinspace}
\providecommand{\MR}{\relax\ifhmode\unskip\space\fi MR }
\providecommand{\MRhref}[2]{%
  \href{http://www.ams.org/mathscinet-getitem?mr=#1}{#2}
}
\providecommand{\href}[2]{#2}


\end{document}